\renewcommand{\epsilon}{\varepsilon}
\newtheorem{theorem}{Theorem}[section]
\newtheorem{lemma}[theorem]{Lemma}
\newtheorem{Prop}[theorem]{Proposition}
\newtheorem{defn}[theorem]{Definition}
\newcommand{\bth}{\begin{theorem}}
	\newcommand{\ble}{\begin{lemma}}
		\newcommand{\bcor}{\begin{corr}}
			\newcommand{\bdeff}{\begin{defn}}
				\newcommand{\bprop}{\begin{proposition}}
					\newcommand{\ele}{\end{lemma}}
				\newcommand{\ecor}{\end{corr}}
			\newcommand{\edeff}{\end{defn}}
		\newcommand{\eprop}{\end{proposition}}
	\renewcommand{\Pi}{\varPi}
	\renewcommand{\epsilon}{\varepsilon}
	\numberwithin{equation}{section}
	\thanks{The first author was supported  by National Science Foundation of China(No.12101145) and  Guangxi Science, Technology Project (Grant No. GuikeAD22035202).}
\title
{Lagrangian Mean Curvature Flow in Pseudo-Euclidean Space \uppercase\expandafter{\romannumeral2}
}
\author{Shanshan Li}
\address{School of Mathematics and Statistics, Guangxi Normal University,
	Guilin, Guangxi 541004, People's Republic of China,
	E-mail: 2437826965@qq.com}
\author{Jiaru Lv}
\address{School of Mathematics and Statistics, Guangxi Normal University,
	Guilin, Guangxi 541004, People's Republic of China,
	E-mail: 1904363078@qq.com}
\author{Rongli Huang}
\address{School of Mathematics and Statistics, Guangxi Normal University,
	Guilin, Guangxi 541004, People's Republic of China,
	E-mail: ronglihuangmath@gxnu.edu.cn}
\begin{document}
	\maketitle
\begin{abstract}
		In this paper, we consider the mean curvature flow of entire Lagrangian graphs with initial data in the pseudo-Euclidean space, which is related to the special Lagrangian parabolic equation. We show that the parabolic equation \eqref{11} has a smooth solution $u(x,t)$ for three corresponding nonlinear equations between the Monge-Amp$\grave{e}$re type equation($\tau=0$) and the special Lagrangian parabolic equation($\tau=\frac{\pi}{2}$). Furthermore, we get the bound of $D^lu$, $l=\{3,4,5,\cdots\}$ for $\tau=\frac{\pi}{4}$ and the decay estimates of the higher order derivatives when $0<\tau<\frac{\pi}{4}$ and $\frac{\pi}{4}<\tau<\frac{\pi}{2}$. We also prove that $u(x,t)$ converges to smooth self-expanding solutions of \eqref{12}.
	\end{abstract}	
		
	\let\thefootnote\relax\footnote{
		2010 \textit{Mathematics Subject Classification}. Primary 53C44; Secondary 53A10.

		\textit{Keywords and phrases}. pseudo-Euclidean metric, self-expanding solution, Interior Schauder estimates.}

\section{Introduction}
The mean curvature flow in higher codimension was studied extensively in the last few years. Jingyi Chen and Jiayu Li \cite{chen2004singularity} studied the tangent cones at first time singularity of a Lagrangian mean curvature flow and later Andr$\acute{e}$ Neves \cite{neves2013finite} developed a finite time singularity for Lagrangian mean curvature flow, which is in a four-dimensional compact Calabi-Yau manifold. Moreover, some authors studied the long-time existence and  convergence of mean curvature flow. In \cite{han2005mean}, Xiaoli Han and Jiayu Li showed that under suitable conditions which are concerned with the parabolic density of the mean curvature flow, the mean curvature flow with initial surface $\Sigma_0$ exists globally and converges to a holomorphic curve. And Mu-Tao Wang \cite{wang2002long} proved the long-time existence and convergence of graphic mean curvature flow in arbitrary codimension. Further, in \cite{smoczyk2004longtime}\cite{smoczyk2002mean}, Kunt Smoczyk and Smoczyk-Wang established long-time existence and convergence of Lagrangian mean curvature flow into a flat space under some convexity conditions respectively. Albert Chau, Jingyi Chen and Weiyong He\cite{chau2012lagrangian} had proved that the mean curvature flow of entire Lagrangian graphs with Lipschitz continuous initial data has the long time existence and convergence.  And Haozhao Li \cite{li2009convergence} extended the convergence of the Lagrangian mean curvature flow to a general K$\ddot{a}$hler-Einstein manifold under some stability conditions where dimension is arbitrary. And from \cite{xin2008mean}, Yuanlong Xin produced the mean curvature flow exists for all time with initial data $M_0$, if $M_0$ is complete, curvature is bounded and the image of the Gauss map is suitably small. Further, the evolution of mean curvature flow of a complete space-like hypersurface in a pseudo-Euclidean space manifold characterized by $(m,n)$ were studied by Yuanlong Xin \cite{xin2011mean}.  The priori estimate of a smooth solution $u(x,t)$ of Lagrangian mean curvature flow was obtained in dimensions $n\ge4$, that is $[u_t]_{1,\frac{1}{2};Q_{\frac{1}{2}}}+[D^2u]_{1,\frac{1}{2};Q_{\frac{1}{2}}}\le C(\parallel D^2u\parallel_{L^{\infty}(Q_1)})$, it follows from Tu A. Nguyen and Yu Yuan\cite{nguyen2011priori}. In the case of the Hessian of the potential is bounded, Albert Chau, Jingyi Chen and Yu Yuan\cite{chau2012rigidity} produced any space-like entire graphic self-shrinking solution of the Lagrangian mean curvature folw is flat when it is in $\mathbb{C}^n$ with the pseudo-Euclidean metric. The uniqueness of Lagrangian self-expanders in $\mathbb{C}^n$ with $n\ge2$ first introduced by Jason D Lotay and Andr$\acute{e}$ Neves in \cite{lotay2013uniqueness}. In this paper, we extend the Lagrangian mean curvature flow in the pseudo-Euclidean space for $0<\tau<\frac{\pi}{2}$ (cf. \cite{xin2018minimal}).\\
\indent The special Lagrangian parabolic equation can be written as
\begin{align}\label{11}
\begin{cases}
	\frac{\partial u}{\partial t}=F_{\tau}(D^{2}u), \quad t>0,x\in \mathbb{R}^{n},\\
\;\; u=u_{0}(x), \qquad\; t=0,x\in \mathbb{R}^{n}.
\end{cases}
\end{align}
with $\tau\in[0,\frac{\pi}{2}]$, $\lambda(D^{2}u)=(\lambda_{1},\lambda_{2},\cdots,\lambda_{n})$ are the eigenvalues of $D^{2}u$, where
 \begin{align*}
	F_{\tau}(\lambda)=	\left\{
	\begin{aligned}
		&\frac{1}{2}\sum_{i=1}^{n}\rm{ln}\lambda_{i},\quad &\tau=0,\\
		&\frac{\sqrt{a^{2}+1}}{2b}\sum_{i=1}^{n}\rm{ln}\frac{\lambda_{i}+a-b}{\lambda_{i}+a+b},\quad &0<\tau<\frac{\pi}{4},\\
		&-\sqrt{2}\sum_{i=1}^{n}\frac{1}{1+\lambda_{i}},\quad &\tau=\frac{\pi}{4},\\
		&\frac{\sqrt{a^{2}+1}}{b}\sum_{i=1}^{n}\rm{arctan}\frac{\lambda_{i}+a-b}{\lambda_{i}+a+b},\quad &\frac{\pi}{4}<\tau<\frac{\pi}{2},\\
		&\sum_{i=1}^{n}\rm{arctan}\lambda_{i},\quad &\tau=\frac{\pi}{2},
	\end{aligned}
	\right.
\end{align*}
$a=cot\tau$, $b=\sqrt{|cot^{2}\tau-1|}$.\\
By Proposition \ref{prop2.1}, there exists a family of diffeomorphisms
$$\varphi_{t}:\mathbb{R}^{n}\to\mathbb{R}^{n},$$
such that
$$Y(x,t)=(\varphi_{t},Du(\varphi_{t},t))\subset\mathbb{R}^{2n}_{n}$$
is a solution to the mean curvature flow in the pseudo-Euclidean space
\begin{align}\label{12}
\begin{cases}
		\frac{dY(x,t)}{dt}\;=\vec{H},\\
		Y(x,0)=Y_{0}(x).
\end{cases}
\end{align}
Here $\vec{H}$ is the mean curvature vector of the space-like submanifold $Y(x, t)\subset \mathbb{R}^{2n}_{n}$ and
$$Y_{0}(x)=(x,Du_{0}(x)).$$
\indent Next, we consider the long-time existence and convergence of the equation \eqref{11}. And the long-time existence and convergence results for solution of \eqref{11} have been obtained for $\tau=0$ and $\tau=\frac{\pi}{2}$ as follows.\\
\indent For $\tau=0$, \eqref{11} is Monge-Amp$\grave{e}$re type equation, Rongli Huang \cite{huang2011lagrangian} proved the long-time existence of solution which is smooth and got the bound of $D^lu$, $l=\{3,4,5,\cdots\}$ when Hessian satisfies $\mu I\le D^{2}u_{0}(x)\le\Lambda I$ where $\mu$, $\Lambda$ are positive constants, $\mu<\Lambda$ and $I$ is the identity matrix. Further, Rongli Huang and Zhizhang Wang \cite{huang2011entire} established uniform decay estimates ${\sup \limits_ {x\in \mathbb{R}^{n}}}|D^{3}u(\cdot,t)|\le\frac{C}{t}$ and ${\sup \limits_ {x\in \mathbb{R}^{n}}}|D^{l}u(\cdot,t)|\le\frac{C}{t^{l-2}}$ for $l\ge3$ away from time $t=0$.  \\
\indent For $\tau=\frac{\pi}{2}$, \eqref{11} is the special Lagrangian parabolic equation. Albert Chau, Jingyi Chen and Yu Yuan \cite{chau2013lagrangian} obtained it has a longtime smooth solution $u(x,t)$ for all $t>0$ and got the decay estimate ${\sup \limits_ {x\in \mathbb{R}^{n}}}|D^{l}u(x,t)|^2\le \frac{C_{l,\delta}}{t^{l-2}}$ for all $l\ge3$, when Hessian satisfies $-(1+\eta)I\le D^2u_0\le(1+\eta)I$ for a small positive dimensional constant $\eta=\eta(n)$.\\
\indent The main goal of this paper is to settle the long-time existence and convergence results of solution for the remaining cases of \eqref{11}, i.e., $\tau=\frac{\pi}{4}$, $0<\tau<\frac{\pi}{4}$ and $\frac{\pi}{4}<\tau<\frac{\pi}{2}$, respectively.
\begin{defn}
	Assume that $u_{0}(x)\in C^{2}(\mathbb{R}^{n})$. We call $u_{0}(x)$ satisfying\\
	\rm{(1)(Condition $A$)} $if$
	$$u_{0}(x)=\frac{u_{0}(Rx)}{R^{2}},\qquad\forall R>0;$$
	\rm{(2)(Condition $B$)} $if$
	$$(-1+\zeta)I\le D^{2}u_{0}(x)\le\varrho I,\quad x\in\mathbb{R}^{n},$$
	where $0<\zeta<\varrho+1$, $\tau=\frac{\pi}{4}$, $\varrho$ is a positive constant and $I$ is the identity matrix.\\
	\rm{(3)(Condition $E$)} $if$
	$$(b\mu-a)I\le D^{2}u_{0}(x)\le(b\Lambda-a)I,\quad x\in\mathbb{R}^{n},$$
	where $a=cot\tau$, $b=\sqrt{|cot^{2}\tau-1|}$, $0<\tau<\frac{\pi}{4}$, $\mu$, $\Lambda$ are positive constants, $\mu<\Lambda$ and $I$ is the identity matrix.\\
	\rm{(4)(Condition $L$)} $if$
	$$-(b+b\eta+a)I\le D^{2}u_{0}(x)\le (b+b\eta-a)I,\quad x\in\mathbb{R}^{n},$$
	where $\eta=\eta(n)$ is positive dimensional constant, $a=cot\tau$, $b=\sqrt{|cot^{2}\tau-1|}$, $\frac{\pi}{4}<\tau<\frac{\pi}{2}$ and $I$ is the identity matrix.
\end{defn}
 Our results are the followings:
\begin{theorem}\label{1.2}
	Let $u_{0} : \mathbb{R}^{n}\to\mathbb{R}$ be a $C^{2}$ function satisfying condition $B$ and consider the equation
	\begin{align}\label{121}
	\begin{cases}
	\frac{\partial u}{\partial t}=F_{\frac{\pi}{4}}(D^2u), \quad t>0,x\in \mathbb{R}^{n},\\
	\;\; u=u_{0}(x), \qquad\;\; t=0,x\in \mathbb{R}^{n}.
	\end{cases}
	\end{align}
	 Then there exists a unique solution of \eqref{121} such that
	\begin{align}\label{13}
	u(x,t)\in C^{\infty}(\mathbb{R}^{n}\times(0,+\infty)\cap C(\mathbb{R}^{n}\times[0,+\infty)).
	\end{align}
	where $u(\cdot,t)$ satisfies condition $B$. More generally, for $l=\{3,4,5,\cdots\}$ and $\varepsilon_{0}>0$, there holds
	\begin{align}\label{100005}
	{\sup \limits_ {x\in \mathbb{R}^{n}}}|D^{l}u(x,t)|\le C,\quad \forall t\in (\varepsilon_{0},+\infty),
	\end{align}
	where $C$ depends only on $n$, $\zeta$, $\varrho$, $\frac{1}{\varepsilon_{0}}.$
\end{theorem}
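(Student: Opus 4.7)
The plan is to argue by the standard continuity-method framework for fully nonlinear parabolic equations: short-time existence for smoothed data, propagation of Condition $B$ to yield uniform parabolicity, interior $C^{2,\alpha}$ control via Evans--Krylov, and Schauder bootstrapping to close the continuation argument. First I would establish short-time existence: since $F_{\pi/4}(\lambda)=-\sqrt{2}\sum(1+\lambda_i)^{-1}$ is smooth and strictly elliptic on $\{1+\lambda_i>0\}$, approximating $u_0$ by smooth data and applying standard quasilinear parabolic theory produces a classical solution on some maximal interval $[0,T_{\max})$; the goal is to show $T_{\max}=+\infty$ with quantitative control.

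The core step is to verify that Condition $B$ is preserved. In the eigenbasis of $D^{2}u$ one has $F^{ij}=\sqrt{2}(1+\lambda_i)^{-2}\delta_{ij}$, and $F_{\pi/4}$, being $-\sqrt{2}\,\mathrm{tr}(I+D^{2}u)^{-1}$, is concave in $D^{2}u$ on the admissible set. Differentiating \eqref{121} twice in a fixed unit direction $e$ and setting $w=u_{ee}$ yields
\begin{equation*}
w_t = F^{ij}w_{ij}+F^{ij,pq}u_{ije}u_{pqe}\le F^{ij}w_{ij},
\end{equation*}
so the parabolic maximum principle forces $\sup_x u_{ee}(x,t)\le\sup_x u_{ee}(x,0)\le\varrho$, giving $D^{2}u\le\varrho I$. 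For the lower bound $D^{2}u\ge(-1+\zeta)I$, I would combine the same concavity with a barrier argument that exploits the fact that $u_t=F_{\pi/4}\to-\infty$ as any $\lambda_i\to-1^{+}$, so that the degeneracy of the operator drives the Hessian away from this boundary along the flow. The outcome is uniform parabolicity with ellipticity constants depending only on $n,\zeta,\varrho$.

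With $D^{2}u$ pinched in a fixed compact region of the admissible set and $F_{\pi/4}$ concave, the Evans--Krylov theorem supplies an interior parabolic $C^{2,\alpha}$ estimate on cylinders $Q_r$ in terms of $\|D^{2}u\|_{L^{\infty}}$ and $r$. Standard Schauder theory for the linearized equation $v_t=F^{ij}v_{ij}$ (applied successively to spatial derivatives of $u$) then bootstraps to any interior $C^{k,\alpha}$ bound, giving smoothness on $\mathbb{R}^{n}\times(0,T_{\max})$ and permitting continuation to $T_{\max}=+\infty$. To obtain the quantitative bound \eqref{100005}, I would apply the Schauder estimate on parabolic cylinders centered at $(x_0,t_0)$ with $t_0\ge\varepsilon_0$ and spatial radius comparable to $\sqrt{\varepsilon_0}$, producing constants depending only on $n,\zeta,\varrho$ and $1/\varepsilon_0$. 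Uniqueness in the class of solutions with Hessian satisfying Condition $B$ then follows from the comparison principle for concave uniformly parabolic operators.

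The main obstacle will be the lower Hessian bound: concavity hands over $D^{2}u\le\varrho I$ at once, but preserving $\lambda_i>-1+\zeta$ requires controlling the degeneration of $F_{\pi/4}$. A naive maximum-principle computation on $\lambda_{\min}$ produces a wrong-signed quadratic term, so I expect the right strategy is either (i) to work with a monotone quantity such as $-\sum\log(1+\lambda_i)$ or $\mathrm{tr}(I+D^{2}u)^{-1}$ whose evolution equation admits a maximum principle, or (ii) to build an explicit flow-barrier using the scale-invariant Condition $A$ structure that will reappear in the convergence-to-self-expander step.
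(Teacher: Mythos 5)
Your broad skeleton — propagate the Hessian bounds to get uniform parabolicity, Krylov–Safonov/Evans–Krylov for interior $C^{2,\alpha}$, then push to smoothness — is the same high-level idea as the paper, but two of the steps you wave at are exactly where the paper has to do something nontrivial, and in both places your proposal is missing the key device.

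\textbf{Lower Hessian bound.} You correctly identify that the naive maximum-principle computation on $u_{ee}$ only gives $D^{2}u\le\varrho I$ (concavity gives $w_t\le F^{ij}w_{ij}$, which is the wrong direction for a lower bound), and you leave the fix as a to-do. The paper's resolution is a short algebraic trick: set $\bar u=u+\tfrac12|x|^{2}$, so that $F_{\pi/4}(D^{2}u)=-\sqrt2\,\mathrm{tr}\,(I+D^{2}u)^{-1}$ becomes $\overline F(D^{2}\bar u)=-\sqrt2\,\mathrm{tr}\,(D^{2}\bar u)^{-1}$ on $\Gamma_{+}$. Then $\overline F^{*}(A):=-\overline F(A^{-1})=\sqrt2\,\mathrm{tr}\,A$ is linear, hence concave, so the Lions--Musiela preservation-of-convexity theorem (Lemma 2.5 of the paper) applies and preserves $D^{2}\bar u\ge 0$; combined with concavity of $\overline F$ itself (Lemma 3.31) it preserves the two-sided pinching $\mu I\le D^{2}\bar u\le\Lambda I$, i.e.\ Condition $B$ for $u$. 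Without this (or an equivalent) structural lemma, the ``barrier argument exploiting $F_{\pi/4}\to-\infty$'' is not a proof, and you flag it yourself as the main obstacle.

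\textbf{Higher-order estimate \eqref{100005}.} Here is the more serious gap. You propose Evans--Krylov followed by ``standard Schauder bootstrap'' to get $|D^{3}u|\le C$ on $t\ge\varepsilon_{0}$. But Condition $B$ gives no control on $Du_{0}$, so $Du$ and $u$ can grow linearly and quadratically in $x$. The Schauder estimate you would invoke (Lemma 2.8 in the paper) bounds $|D^{2}v|_{C^{0}}$ on $Q_{R/3}$ by $|v|_{C^{0}}$ on $Q_{R}$; applying it to $v=D_{i}u$ needs a uniform bound on $|Du|$ over cylinders $Q_{R,x_{0}}$ as $x_{0}\to\infty$, which is not available, and the time-oscillation of $Du$ cannot be controlled without already knowing $D^{3}u$ is bounded. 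The paper explicitly flags that ``it seems difficult to get the bound of $D^{3}u$ only using interior Schauder estimates without the assumption of $\sup|Du_{0}|<\infty$'' and resolves it with a parabolic blow-up argument: if $\sup_{x,\,t\ge\varepsilon_{0}}|D^{3}u|=\infty$, rescale at near-maximum points by $\sigma_{k}=\rho_{k}^{1/2}$ where $\rho_{k}\sim\sup|D^{3}u|^{2}$. The rescaled $D^{2}u_{\sigma_{k}}$ has $C^{\alpha}$-seminorm $\sigma_{k}^{-\alpha}[D^{2}u]_{C^{\alpha}}\to 0$, so the subsequential limit $u_{\infty}$ has $D^{2}u_{\infty}$ constant and hence $D^{3}u_{\infty}\equiv 0$, contradicting $|D^{3}u_{\infty}(0,0)|\ge 1$. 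Once the $D^{3}$ bound is in hand, Schauder does bootstrap to all $l\ge 3$. Your plan skips the blow-up entirely and so the claimed bootstrap does not close.

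A smaller point: existence on all of $\mathbb{R}^{n}$ for non-decaying entire data is not ``standard quasilinear short-time existence.'' The paper runs the method of continuity on the Dirichlet problems on $B_{N}\times(0,T)$ (Lemmas 2.10, 2.12 establishing that the set of admissible $\gamma$ is closed and open), then passes to the limit $N\to\infty$ by a diagonal argument using the uniform interior estimates. Your proposal should at least acknowledge this exhaustion step.
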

\begin{theorem}\label{1.21}
	Let $u_{0} : \mathbb{R}^{n}\to\mathbb{R}$ be a $C^{2}$ function satisfying condition $E$ and consider the equation
	\begin{align}\label{1211}
	\begin{cases}
	\frac{\partial u}{\partial t}=F_{\tau}(D^{2}u), \quad t>0,x\in \mathbb{R}^{n},\\
	\;\; u=u_{0}(x), \quad\quad\; t=0,x\in \mathbb{R}^{n}.
	\end{cases}
	\end{align}
	Then there exists
	a unique solution of \eqref{1211} when $0<\tau<\frac{\pi}{4}$ such that
	\begin{align}\label{1311}
	u(x,t)\in C^{\infty}(\mathbb{R}^{n}\times(0,+\infty)\cap C(\mathbb{R}^{n}\times[0,+\infty)).
	\end{align}
	where $u(\cdot,t)$ satisfies condition $E$. And then there exists a constant $C$ depending only on $n,a,b,\mu,\Lambda,\frac{1}{\varepsilon_{0}}$ such that
	\begin{align}
	{\sup \limits_ {x\in \mathbb{R}^{n}}}|D^{3}u(\cdot,t)|\le\frac{C}{t},\quad \forall t\in (\varepsilon_{0},+\infty),
	\end{align}
	More generally, for all $l=\{3,4,5,\cdots\}$ there holds
	\begin{align}
	{\sup \limits_ {x\in \mathbb{R}^{n}}}|D^{l}u(\cdot,t)|\le\frac{C}{t^{l-2}},\quad \forall t\in (\varepsilon_{0},+\infty),
	\end{align}
\end{theorem}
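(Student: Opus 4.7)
\emph{The plan.} My approach is to reduce Theorem \ref{1.21} to a concave uniformly parabolic equation on the invariant set prescribed by Condition $E$, then run the standard sequence of short-time existence, a priori estimates, and continuation to obtain long-time existence, and finally exploit the scale invariance of \eqref{1211} to derive the decay estimates. The key reduction is the substitution
\begin{equation*}
v(x,t):=u(x,t)+\frac{a}{2}|x|^{2},
\end{equation*}
which shifts $D^{2}u$ by $aI$. Writing $\nu_{i}$ for the eigenvalues of $D^{2}v$, Condition $E$ becomes $b\mu I\le D^{2}v(\cdot,t)\le b\Lambda I$, and \eqref{1211} takes the form $v_{t}=\tilde{F}_{\tau}(D^{2}v)$ with
\begin{equation*}
\tilde{F}_{\tau}(\nu)=\frac{\sqrt{a^{2}+1}}{2b}\sum_{i=1}^{n}\ln\frac{\nu_{i}-b}{\nu_{i}+b},\qquad \frac{\partial\tilde{F}_{\tau}}{\partial\nu_{i}}=\frac{\sqrt{a^{2}+1}}{\nu_{i}^{2}-b^{2}}.
\end{equation*}
On the elliptic set $\{\nu_{i}>b\}$ the function $\tilde{F}_{\tau}$ is smooth, strictly increasing and strictly concave in each eigenvalue, hence a smooth concave symmetric function of $D^{2}v$. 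Condition $E$ (which implicitly forces $\mu>1$, since otherwise the logarithms in $F_{\tau}$ are not real) keeps $D^{2}v$ uniformly inside this elliptic set.

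\emph{Execution.} Short-time existence of a smooth solution to $v_{t}=\tilde{F}_{\tau}(D^{2}v)$ is standard for uniformly parabolic concave PDEs. To extend to all time I would first prove that Condition $E$ is preserved along the flow by applying a Hamilton-type tensor maximum principle to the extremal eigenvalues of $D^{2}v$, in direct analogy with the Monge--Amp\`ere case $\tau=0$ treated by Huang \cite{huang2011lagrangian}. With $D^{2}v$ two-sided bounded, the equation is uniformly parabolic and concave with ellipticity constants depending only on $n,a,b,\mu,\Lambda$; the parabolic Evans--Krylov theorem then supplies interior $C^{2,\alpha}$ estimates, and differentiating the equation and iterating parabolic Schauder estimates yields interior $C^{l,\alpha}$ bounds for every $l\ge 3$. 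A continuation argument produces the long-time smooth solution \eqref{1311}, while uniqueness follows from the comparison principle for uniformly parabolic concave equations. For the decay estimates I would use the scale invariance $u(x,t)\mapsto R^{-2}u(Rx,R^{2}t)$ of \eqref{1211}: the rescaled solution still satisfies Condition $E$, so the interior $C^{l,\alpha}$ estimates apply uniformly in $R$ on the unit parabolic cylinder, and translating back with an appropriate choice of $R$ in terms of $t$ yields the asserted bounds on $|D^{l}u(\cdot,t)|$.

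\emph{Main obstacle.} The step I expect to absorb the most work is the propagation of Condition $E$. The evolution of an extremal eigenvalue of $D^{2}v$ combines the concavity terms of $\tilde{F}_{\tau}$ with lower-order commutator contributions, and one has to exclude that $\nu_{i}$ touches $b\mu$ or $b\Lambda$ in finite time. The concavity of $\tilde{F}_{\tau}$, together with the positivity $\partial\tilde{F}_{\tau}/\partial\nu_{i}>0$ throughout the elliptic set, should provide the sign information needed to close this maximum principle argument; once the preservation of Condition $E$ is in hand, the remaining steps are essentially routine.
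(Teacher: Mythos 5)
Your reduction $v=u+\frac{a}{2}|x|^{2}$ is a correct simplification: it puts the equation in the form $v_{t}=\tilde F_{\tau}(D^{2}v)$ with $\tilde F_{\tau}$ concave and increasing on $\{\nu_{i}>b\}$, and your observation that Condition $E$ forces $\mu>1$ (so $\nu_{i}>b$) is right. The long-time existence and preservation-of-Hessian-bound scheme you sketch parallels what the paper actually carries out for $\tau=\frac{\pi}{4}$ (Lemmas \ref{2.5}--\ref{2.13}) and, if fully executed, should produce the solution satisfying \eqref{1311} and condition $E$. However, the paper's own proof of Theorem \ref{1.21} does not run this scheme: it transforms $u$ by $\varphi(x)=\frac{b}{\sqrt{a^{2}+1}}u\bigl(\frac{(a^{2}+1)^{1/4}}{b}x\bigr)+\frac{a}{2b}|x|^{2}$ into the Monge--Amp\`ere flow \eqref{43} (the $\tau=0$ case), checks that Condition $E$ becomes $\mu I\le D^{2}\varphi_{0}\le\Lambda I$, and then simply cites the existence and decay results of \cite{huang2011lagrangian} and \cite{huang2011entire}.

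The genuine gap in your argument is the decay estimate. The parabolic scaling $u_{R}(x,t)=R^{-2}u(Rx,R^{2}t)$ gives $D^{l}u_{R}(x,t)=R^{l-2}D^{l}u(Rx,R^{2}t)$, so a uniform interior bound $|D^{l}u_{R}(\cdot,1)|\le C_{l}$ for all $R$ translates back only to
\begin{equation*}
\sup_{x}|D^{l}u(x,T)|\le C_{l}\,T^{-\frac{l-2}{2}},
\end{equation*}
i.e.\ $|D^{3}u|\lesssim t^{-1/2}$. But Theorem \ref{1.21} asserts the stronger rate $|D^{l}u|\lesssim t^{-(l-2)}$, in particular $|D^{3}u|\lesssim t^{-1}$. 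Scaling alone cannot give this: the stronger rate, exactly as in the Monge--Amp\`ere case of \cite{huang2011entire}, comes from a Pogorelov/Bernstein-type interior third-derivative estimate that exploits the specific structure of the operator, which your outline does not supply. (Compare Theorem \ref{1.112}, where only $|D^{l}u|^{2}\lesssim t^{-(l-2)}$ is claimed --- that \emph{is} the scaling rate, and the special-Lagrangian case $\tau\in(\frac{\pi}{4},\frac{\pi}{2})$ indeed inherits only that weaker decay from \cite{chau2013lagrangian}.) To close this gap you would need either to reduce to the Monge--Amp\`ere flow as the paper does and invoke \cite{huang2011entire}, or to reprove the sharper third-derivative decay directly for $\tilde F_{\tau}$.

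A minor point: ``strictly concave in each eigenvalue, hence a concave symmetric function of $D^{2}v$'' is not a valid implication in general; it is true here because $\tilde F_{\tau}(\nu)=\sum_{i}f(\nu_{i})$ with a single concave increasing $f$, which is a standard criterion for matrix concavity of spectral sums, but that reason should be stated.
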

\begin{theorem}\label{1.112}
	Let $u_{0} : \mathbb{R}^{n}\to\mathbb{R}$ be a $C^{2}$ function satisfying condition $L$ and consider the equation
	\begin{align}\label{12111}
	\begin{cases}
	\frac{\partial u}{\partial t}=F_{\tau}(D^{2}u) \quad t>0,x\in \mathbb{R}^{n},\\
	\;\; u=u_{0}(x), \quad\;\;\; t=0,x\in \mathbb{R}^{n}.
	\end{cases}
	\end{align} Then there exists a unique solution of \eqref{12111} when $\frac{\pi}{4}<\tau<\frac{\pi}{2}$ such that
	\begin{align}\label{141}
	u(x,t)\in C^{\infty}(\mathbb{R}^{n}\times(0,+\infty)\cap C(\mathbb{R}^{n}\times[0,+\infty)).
	\end{align}
	where $u(\cdot,t)$ satisfies condition $L$. More generally, for $l=\{3,4,5,\cdots\}$, there holds
	\begin{align}\label{142}
	{\sup \limits_ {x\in \mathbb{R}^{n}}}|D^{l}u(x,t)|^2\le \frac{C_l}{t^{l-2}},\quad \forall t\in (\varepsilon_{0},+\infty),
	\end{align}
	where $C$ depends only on $n,l,\eta=\eta(n),\frac{1}{\varepsilon_{0}}$.
\end{theorem}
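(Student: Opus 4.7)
The plan is to reduce Theorem~\ref{1.112} to the case $\tau=\frac{\pi}{2}$, already established by Chau, Chen and Yuan~\cite{chau2013lagrangian}, via an explicit affine change of the dependent and independent variables. The starting point is the elementary identity
\[
\arctan\frac{\lambda+a-b}{\lambda+a+b} \;=\; \arctan\frac{\lambda+a}{b} \;-\; \frac{\pi}{4},
\]
valid on the smooth branch, which rewrites
\[
F_{\tau}(D^{2}u) \;=\; \frac{\sqrt{a^{2}+1}}{b}\sum_{i=1}^{n}\arctan\frac{\lambda_i(D^{2}u)+a}{b} \;-\; \frac{n\pi\sqrt{a^{2}+1}}{4b}.
\]
This suggests introducing
\[
\tilde u(x,s) \;:=\; \frac{1}{b}\,u\!\left(x,\tfrac{b^{2}}{\sqrt{a^{2}+1}}\,s\right) \;+\; \frac{a}{2b}\,|x|^{2} \;+\; \frac{n\pi}{4}\,s.
\]
A direct computation yields $D^{2}\tilde u=\frac{1}{b}(D^{2}u+aI)$, so the eigenvalues transform as $\tilde\lambda_i=(\lambda_i+a)/b$, and $\tilde u$ satisfies exactly the special Lagrangian parabolic equation $\partial_s\tilde u=\sum_i\arctan\tilde\lambda_i=F_{\pi/2}(D^{2}\tilde u)$.

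Under this substitution the hypothesis of condition~$L$, namely $-(b+b\eta+a)I\le D^{2}u_0\le(b+b\eta-a)I$, translates exactly into $-(1+\eta)I\le D^{2}\tilde u_0\le(1+\eta)I$, which is precisely the smallness hypothesis of Chau--Chen--Yuan. Their long-time existence, uniqueness and decay results then furnish a smooth solution $\tilde u\in C^{\infty}(\mathbb{R}^{n}\times(0,\infty))\cap C(\mathbb{R}^{n}\times[0,\infty))$ that preserves the Hessian bound and obeys $\sup_{x\in\mathbb{R}^{n}}|D^{l}\tilde u(\cdot,s)|^{2}\le C_{l}/s^{l-2}$ for every $l\ge 3$.

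Inverting the affine substitution produces the desired solution $u$ of~\eqref{12111}, smooth for $t>0$ and continuous up to $t=0$, whose Hessian remains in condition~$L$. Since the substitution modifies $u$ only by a quadratic polynomial in $x$ and a linear function of $t$, one has $D^{l}u=b\,D^{l}\tilde u$ for all $l\ge 3$; combined with the time rescaling $s=\tfrac{\sqrt{a^{2}+1}}{b^{2}}t$, this converts the Chau--Chen--Yuan decay directly into the bound~\eqref{142}, with $C_l$ depending only on $n$, $l$, $\eta=\eta(n)$ and, through $\varepsilon_{0}\le t$, on $1/\varepsilon_{0}$. Uniqueness for $u$ is inherited from uniqueness for $\tilde u$, as the map $u\leftrightarrow\tilde u$ is bijective.

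The one delicate point is a branch issue: the Möbius expression $(\lambda+a-b)/(\lambda+a+b)$ has a singularity at $\lambda=-(a+b)$ which lies inside the eigenvalue interval allowed by condition~$L$, so the identity above requires taking the smooth branch of $F_\tau$. This is legitimate because the derivative $\partial F_\tau/\partial\lambda_i=\sqrt{a^{2}+1}/((\lambda_i+a)^{2}+b^{2})$ is globally positive on $\mathbb{R}$, so the smooth extension is well-defined and the equation remains uniformly parabolic on the Hessian range prescribed by condition~$L$. Once this is fixed the remaining work is purely bookkeeping, and every assertion of Theorem~\ref{1.112} follows at once from the corresponding statement for $\tau=\pi/2$ in~\cite{chau2013lagrangian}.
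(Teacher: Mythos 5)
Your proof is correct and takes essentially the same route as the paper: apply the identity $\arctan\frac{\lambda+a-b}{\lambda+a+b}=\arctan\frac{\lambda+a}{b}-\frac{\pi}{4}$, perform an affine change of variables so that the transformed potential solves the special Lagrangian parabolic equation with initial Hessian pinched between $\mp(1+\eta)I$, and invoke the $\tau=\frac{\pi}{2}$ theorem of Chau--Chen--Yuan. The only stylistic difference is that you rescale $t$ where the paper rescales $x$ (both are affine conjugations), and your version is in fact a bit cleaner on two points: the additive time term $+\frac{n\pi}{4}s$ is exactly what is needed so that $\tilde u$ genuinely solves $\partial_s\tilde u=\sum_i\arctan\tilde\lambda_i$ (the paper's constant $-\frac{n\pi}{4}$ leaves a residual $-\frac{n\pi}{4}$ on the right-hand side that must be absorbed), and you explicitly flag the branch issue at $\lambda=-(a+b)$, which lies inside the range allowed by condition $L$ and which the paper's identity silently glosses over.
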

For Theorem \ref{1.2}, the existence results are based in a prior estimates on $u$. P. L. Lions and M. Musiela \cite{lions2006convexity}
introduced a class of fully nonlinear parabolic equations where the convexity properties of the
solutions are preserved. So we are able to derive a positive lower bound and an upper bound for
the eigenvalues of $D^{2}u$. By the Krylov-Safonov Theorem, we obtain the $C^{\alpha}$ norm of $D^{2}u$. But it seems difficult to get the bound of $D^{3}u$ only using interior Schauder estimates without the assumption of ${\sup \limits_{x\in \mathbb{R}^{n}}} |Du_{0}|< +\infty$. To overcome the difficulty, we will use the blow-up argument to prove
$${\sup \limits_{x\in \mathbb{R}^{n},\ t\ge\varepsilon_{0}}}|D^{3}u|<+\infty,$$
and further establish \eqref{100005} by interior Schauder estimates. Here we do not need the gradient
bound of $u_{0}$.\\
\indent Consider the following equation:
\begin{align}\label{15}
	F_{\tau}(D^{2}u)=u-\frac{1}{2}<x,Du>.
\end{align}
where $<\cdot,\cdot>$ denotes the standard inner product on $\mathbb{R}^{n}$.\\
\indent According to the definition in \cite{huang2022entire}, we can show that an entire solution to \eqref{15} is a self-expanding solution to Lagrangian mean curvature flow in the pseudo-Euclidean space.\\
\indent The following theorems show that we can obtain the self-expanding solutions by \eqref{11}.
\begin{theorem}\label{1.3}
	Let $u_{0}:\mathbb{R}^{n}\to\mathbb{R}$ be a $C^{2}$ function which satisfies condition $B$ and  consider the equation \eqref{121}. Assume that
	\begin{align}\label{16}
		\lim_{r\to+\infty}r^{-2}u_{0}(rx)=U_{0}(x),
	\end{align}
for some $U_{0}(x)\in C^{2}(\mathbb{R}^{n})$. Let $u(x, t)$ and $U(x, t)$ be two solutions to \eqref{121} with initial values $u_{0}(x)$ and $U_{0}(x)$ respectively. Then
\begin{align}
	\lim_{t\to+\infty}t^{-1}u(\sqrt{t}x,t)=U(x,1).
\end{align}\label{17}
Here the convergence is uniform and smooth in any compact subset of $\mathbb{R}^{n}$, and $U(x, 1)$ is a smooth self-expanding solution of \eqref{12}.
\end{theorem}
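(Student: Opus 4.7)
The proof exploits the parabolic scaling invariance of \eqref{121}. Define
\[
u^{(t)}(x,s) := t^{-1} u\!\left(\sqrt{t}\, x,\, ts\right), \qquad (x,s) \in \mathbb{R}^n \times [0,\infty).
\]
Because $F_{\frac{\pi}{4}}$ depends only on the Hessian and $D_x^2 u^{(t)}(x,s) = D^2 u(\sqrt{t}x, ts)$, each $u^{(t)}$ solves \eqref{121} with initial datum $u^{(t)}(x,0) = t^{-1} u_0(\sqrt{t}x)$. The hypothesis \eqref{16} (take $r = \sqrt{t}$) gives pointwise convergence of $u^{(t)}(\cdot,0)$ to $U_0$; since the rescaling leaves the Hessian invariant, $u^{(t)}(\cdot,0)$ continues to satisfy Condition $B$ with the same constants $\zeta, \varrho$, and the resulting $C^{1,1}$-equicontinuity upgrades the pointwise limit to locally uniform convergence.

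Applying Theorem \ref{1.2} to each $u^{(t)}$ yields $t$-independent estimates $\|D^l u^{(t)}\|_{L^\infty(\mathbb{R}^n \times [\varepsilon_0, \infty))} \leq C_l(n, \zeta, \varrho, \varepsilon_0)$ for every $l \geq 2$ and every $\varepsilon_0 > 0$. Arzel\`a--Ascoli then extracts, from any sequence $t_k \to \infty$, a subsequence along which $u^{(t_k)} \to W$ in $C^\infty_{\mathrm{loc}}(\mathbb{R}^n \times (0,\infty))$, with $W$ again a solution of \eqref{121}. To identify $W = U$ and pin down the full limit, I use uniform parabolicity of $F_{\frac{\pi}{4}}$ on matrices satisfying Condition $B$ (the eigenvalues of the symbol are $\sqrt{2}/(1+\lambda_i)^2$, bounded above and below for $\lambda_i \in [-1+\zeta, \varrho]$) together with a parabolic comparison principle on $\mathbb{R}^n$ for solutions of at most quadratic growth, employing quadratic barriers to control the unbounded domain. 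This gives
\[
\sup_{K \times [0,T]} |u^{(t)} - U| \leq C(K,T)\, \sup_{K'}|u^{(t)}(\cdot,0) - U_0| \longrightarrow 0
\]
for every compact $K \subset \mathbb{R}^n$, and combined with the uniform smoothness produces $t^{-1}u(\sqrt{t}x,t) \to U(x,1)$ in $C^\infty_{\mathrm{loc}}$.

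To conclude that $w(x) := U(x,1)$ is a self-expander, note that $U_0$ is homogeneous of degree two: $R^{-2} U_0(Rx) = \lim_{r \to \infty}(rR)^{-2} u_0(rRx) = U_0(x)$ for every $R > 0$. Therefore $U^{(t)}(\cdot, 0) = U_0$, and uniqueness in Theorem \ref{1.2} forces $U^{(t)} \equiv U$, i.e.
\[
U(y, s) = s\, w\!\left(y/\sqrt{s}\,\right), \qquad s > 0,\ y \in \mathbb{R}^n.
\]
Differentiating this ansatz gives $D_y^2 U(y,s) = D^2 w(y/\sqrt{s})$ and $\partial_s U(y,s) = w(y/\sqrt{s}) - \tfrac{1}{2\sqrt{s}} \langle y, Dw(y/\sqrt{s})\rangle$; inserting these into $U_s = F_{\frac{\pi}{4}}(D^2 U)$ and evaluating at $s = 1$ yields $F_{\frac{\pi}{4}}(D^2 w) = w - \tfrac{1}{2}\langle x, Dw\rangle$, which is exactly \eqref{15}. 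By the discussion just before the theorem, $w$ then corresponds to a smooth self-expanding solution of \eqref{12}.

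The main obstacle is the stability step: transferring locally uniform convergence of initial data to $C^\infty_{\mathrm{loc}}$ convergence of the solutions on $\mathbb{R}^n \times (0,\infty)$. On the unbounded domain $\mathbb{R}^n$ this requires a parabolic comparison principle valid for quadratically growing solutions, which is available here because Condition $B$ is preserved along the flow (by Theorem \ref{1.2}) and forces the equation to be uniformly parabolic on the relevant range of Hessians.
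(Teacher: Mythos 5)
Your overall architecture matches the paper's: scale $u^{(t)}(x,s)=t^{-1}u(\sqrt t\,x,ts)$, pull $t$-independent $C^\infty$ estimates from Theorem \ref{1.2}, extract a $C^\infty_{\mathrm{loc}}$ subsequential limit on $\mathbb{R}^n\times(0,\infty)$, identify it with $U$, and then read off the self-expander structure from degree-two homogeneity of $U_0$ together with uniqueness. The self-expander derivation at the end is exactly the content of the paper's Lemma \ref{2.14}.

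Where you diverge is at the \emph{identification} step, and that is where you have a genuine gap. The paper identifies the subsequential limit by a soft argument: since $D^2u_{R_i}$ is bounded, $|\partial_t u_{R_i}|=|F_{\frac{\pi}{4}}(D^2u_{R_i})|$ is uniformly bounded, so the rescaled solutions are equi-Lipschitz in $t$ up to $t=0$; this allows the interchange of limits $\lim_{t\to0}\lim_{i\to\infty}=\lim_{i\to\infty}\lim_{t\to0}$, giving that the limit has initial datum $U_0$, after which uniqueness in Theorem \ref{1.2} both identifies the limit as $U$ and shows subsequence-independence. You instead invoke a quantitative stability bound
\begin{align*}
\sup_{K\times[0,T]}|u^{(t)}-U|\le C(K,T)\,\sup_{K'}|u^{(t)}(\cdot,0)-U_0|,
\end{align*}
justified only by the phrase ``a parabolic comparison principle on $\mathbb{R}^n$ for solutions of at most quadratic growth, employing quadratic barriers.'' For a fully nonlinear equation on an unbounded domain with quadratically growing solutions, such a localized comparison estimate is a nontrivial lemma in its own right: one must linearize the difference along two solutions, check that the coefficients are uniformly elliptic (you correctly note Condition $B$ is preserved, so this holds), and then actually construct barriers that convert a bound on a large compact set into a bound on a smaller parabolic cylinder. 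None of this is carried out. The conclusion is likely true, but as written the step is asserted rather than proved, and it is strictly heavier machinery than what the paper needs. If you replace it by the paper's soft argument — use the uniform $\partial_t$ bound to pass to $t=0$ in the subsequential limit and then invoke uniqueness — the proof closes without requiring any new comparison lemma.

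One more minor point: you claim the eigenvalues of the linearization are $\sqrt{2}/(1+\lambda_i)^2$, which is correct, and that these are bounded above and below on $[-1+\zeta,\varrho]$, also correct; this is exactly what guarantees uniform parabolicity and is used implicitly by the paper in invoking Lemma \ref{2.7}. So the parabolicity observation is fine — it is only the comparison principle built on top of it that is missing.
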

\begin{theorem}\label{1.312}
	Let $u_{0}:\mathbb{R}^{n}\to\mathbb{R}$ be a $C^{2}$ function which satisfies condition $E$ and  consider the equation \eqref{1211}. Assume that
	\begin{align}\label{1612}
	\lim_{r\to+\infty}r^{-2}u_{0}(rx)=U_{0}(x),
	\end{align}
	for some $U_{0}(x)\in C^{2}(\mathbb{R}^{n})$. Let $u(x, t)$ and $U(x, t)$ be two solutions to \eqref{1211} with initial values $u_{0}(x)$ and $U_{0}(x)$ respectively. Then
	\begin{align}
	\lim_{t\to+\infty}t^{-1}u(\sqrt{t}x,t)=U(x,1).
	\end{align}\label{1712}
	Here the convergence is uniform and smooth in any compact subset of $\mathbb{R}^{n}$, and $U(x, 1)$ is a smooth self-expanding solution of \eqref{12}.
\end{theorem}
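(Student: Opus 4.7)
The plan is to exploit the parabolic rescaling $u_R(x,t) := R^{-2} u(Rx, R^2 t)$, which preserves equation \eqref{1211} because $F_\tau$ depends only on the eigenvalues of $D^2 u$ and $D^2 u_R(x,t) = D^2 u(Rx, R^2 t)$. Specializing to $R = \sqrt{t}$ gives $u_{\sqrt{t}}(x, 1) = t^{-1} u(\sqrt{t}\,x, t)$, so the assertion reduces to proving that $u_R(\,\cdot\,, 1) \to U(\,\cdot\,, 1)$ smoothly on compact subsets of $\mathbb{R}^n$ as $R \to +\infty$.

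First, I would analyze the rescaled initial data $u_{0,R}(x) := R^{-2} u_0(Rx)$. Since $D^2 u_{0,R}(x) = D^2 u_0(Rx)$, each $u_{0,R}$ satisfies condition $E$ with the same constants $\mu, \Lambda$. A direct computation from the definition of the limit in \eqref{1612} shows $U_0(Rx) = R^2 U_0(x)$ for all $R > 0$, so $U_0$ satisfies condition $A$; the uniform Hessian bounds on $u_{0,R}$ also force $U_0$ to satisfy condition $E$. Moreover the uniform $C^{1,1}$ bound on $u_{0,R}$ (after subtracting the affine part at the origin) upgrades the pointwise convergence $u_{0,R} \to U_0$ to $C^{1,\alpha}_{\rm loc}$ convergence via Arzel\`a--Ascoli.

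Second, I would apply Theorem \ref{1.21} to each $u_R$. Because $u_{0,R}$ satisfies condition $E$ uniformly in $R$, the higher-derivative estimates
\begin{align*}
\sup_{\mathbb{R}^n} |D^l u_R(\cdot, t)| \le \frac{C}{t^{l-2}}, \qquad t \geq \varepsilon_0,
\end{align*}
hold with $C$ depending only on $n, a, b, \mu, \Lambda, 1/\varepsilon_0$ and \emph{independently of} $R$. Together with the uniform ellipticity of $F_\tau$ on the relevant range of Hessians, this yields $R$-independent $C^\infty_{\rm loc}$ bounds on $u_R$ in $\mathbb{R}^n \times (0, \infty)$ and $C^{1,\alpha}_{\rm loc}$ bounds up to $t = 0$. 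An Arzel\`a--Ascoli extraction produces a subsequence $u_{R_k}$ converging in $C^\infty_{\rm loc}(\mathbb{R}^n \times (0, \infty)) \cap C^0_{\rm loc}(\mathbb{R}^n \times [0, \infty))$ to a solution $U^\infty$ of \eqref{1211} with $U^\infty(\,\cdot\,, 0) = U_0$. By the uniqueness part of Theorem \ref{1.21}, $U^\infty = U$; since every subsequential limit coincides, the full family $u_R(\,\cdot\,, 1)$ converges to $U(\,\cdot\,, 1)$ on compact subsets.

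Finally, to see that $U(x, 1)$ is a self-expander, I would use condition $A$ for $U_0$: the function $\widetilde{U}(x, t) := R^{-2} U(Rx, R^2 t)$ solves \eqref{1211} with initial datum $R^{-2} U_0(Rx) = U_0(x)$, so uniqueness forces $U(x, t) = R^{-2} U(Rx, R^2 t)$ for every $R > 0$. Taking $R = 1/\sqrt{t}$ gives the self-similar form $U(x, t) = t\, U(x/\sqrt{t},\, 1)$; differentiating in $t$ at $t = 1$ and using $\partial_t U = F_\tau(D^2 U)$ yields exactly \eqref{15} for $U(\,\cdot\,, 1)$, which by \cite{huang2022entire} is the self-expander equation for the Lagrangian mean curvature flow \eqref{12}. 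The main obstacle I anticipate is the passage to the limit down to $t = 0$: one must promote the pointwise convergence of the rescaled initial data to enough regularity ($C^{1,\alpha}_{\rm loc}$) to identify $U^\infty$ with $U$ rather than merely with some solution sharing the initial datum, and this step leans decisively on the uniform condition-$E$ bounds and on the uniqueness asserted in Theorem \ref{1.21}.
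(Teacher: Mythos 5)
Your proof is correct, but it takes a genuinely different route from the paper's. The paper handles Theorem \ref{1.312} by the change of variables
$\varphi(x)=\frac{b}{\sqrt{a^{2}+1}}u\bigl(\frac{(a^2+1)^{1/4}}{b}x\bigr)+\frac{a}{2b}\vert x\vert^2$,
which converts the equation for $0<\tau<\frac{\pi}{4}$ into the Monge--Amp\`ere flow ($\tau=0$ case) and converts condition $E$ into the uniform Hessian pinching $\mu I\le D^2\varphi_0\le \Lambda I$; the convergence to a self-expander is then inherited directly from the corresponding theorem of \cite{huang2011lagrangian} for $\tau=0$, with essentially no new estimates. You instead re-run the intrinsic parabolic-rescaling and compactness argument that the paper uses for the $\tau=\frac{\pi}{4}$ case (Theorem \ref{1.3}) and the $\frac{\pi}{4}<\tau<\frac{\pi}{2}$ case (Theorem \ref{1.31}): rescale to $u_R(x,t)=R^{-2}u(Rx,R^2t)$, observe that condition $E$ and the Theorem \ref{1.21} decay estimates are scale-invariant hence $R$-uniform, normalize away the affine part at the origin, extract a $C^\infty_{\rm loc}$-convergent subsequence by Arzel\`a--Ascoli, identify the limit via the uniqueness clause of Theorem \ref{1.21}, and finally derive equation \eqref{15} for $U(\cdot,1)$ from the homogeneity (condition $A$) of $U_0$. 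Both routes are sound. The paper's transformation is shorter once you accept the algebraic reduction, and it automatically transfers all the sharp decay rates from the $\tau=0$ theory; your direct argument is more self-contained (it does not rely on the structure of the $\tau=0$ results beyond what Theorem \ref{1.21} already records) and exhibits the self-expander mechanism uniformly across all $\tau\in(0,\tfrac{\pi}{2})$ rather than case by case. Two small points worth flagging explicitly if you write this up: (i) the uniqueness in Theorem \ref{1.21} is uniqueness within the class of solutions satisfying condition $E$, so you should note that the subsequential limit $U^\infty$ inherits condition $E$ from the uniform Hessian bounds on $u_{R_k}$ before invoking it; (ii) the boundedness of $u_{R}(0,0)=R^{-2}u_0(0)$ and $Du_{R}(0,0)=R^{-1}Du_0(0)$ (which in fact tend to $0$) is what lets you apply Arzel\`a--Ascoli without a nontrivial affine normalization, exactly as in the paper's proof of Theorem \ref{1.3}.
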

\begin{theorem}\label{1.31}
	Let $u_{0}:\mathbb{R}^{n}\to\mathbb{R}$ be a $C^{2}$ function and the Hessian metrix satisfies condition $L$ and consider the equation \eqref{12111}. Assume that
	\begin{align}\label{161}
	\lim_{r\to+\infty}r^{-2}u_{0}(rx)=U_{0}(x),
	\end{align}
	for some $U_{0}(x)\in C^{2}(\mathbb{R}^{n})$. Let $u(x, t)$ and $U(x, t)$ be two solutions to \eqref{12111} with initial values $u_{0}(x)$ and $U_{0}(x)$ respectively. Then
	\begin{align}
	\lim_{t\to+\infty}t^{-1}u(\sqrt{t}x,t)=U(x,1).
	\end{align}\label{171}
	Here the convergence is uniform and smooth in any compact subset of $\mathbb{R}^{n}$, and $U(x, 1)$ is a smooth self-expanding solution of \eqref{12}.
\end{theorem}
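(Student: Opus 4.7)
My plan is to apply a parabolic blow-down argument, parallel to what I would do for Theorems \ref{1.3} and \ref{1.312}. For each $r>0$ define
\begin{equation*}
u^{r}(x,t)=\frac{u(rx,r^{2}t)}{r^{2}}.
\end{equation*}
Because $F_{\tau}$ depends only on $D^{2}u$, with no explicit dependence on $u$, $Du$, $x$ or $t$, the rescaled $u^{r}$ again solves \eqref{12111}, with $u^{r}(x,0)=r^{-2}u_{0}(rx)$ and $D^{2}u^{r}(x,0)=D^{2}u_{0}(rx)$. Hence $u^{r}(\cdot,0)$ satisfies Condition $L$ with the same constants as $u_{0}$, so Theorem \ref{1.112} applies to every $u^{r}$ with constants that do not depend on $r$.

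Next I would extract a subsequential limit. The Condition $L$ Hessian bound gives a locally uniform $C^{1,1}$ bound on $u^{r}(\cdot,0)$, which together with the pointwise convergence $r^{-2}u_{0}(rx)\to U_{0}(x)$ from \eqref{161} upgrades to convergence in $C^{1,\alpha}_{\loc}(\Rn)$; in particular the limit $U_{0}$ also inherits Condition $L$. For $t\ge\eps_{0}>0$ the estimate \eqref{142} provides $r$-uniform bounds on $D^{l}u^{r}$ on compact sets, and using the equation together with the Hessian bound one further gets locally uniform bounds on $u^{r}$ and on its time derivatives. Arzel\`{a}--Ascoli then produces a subsequence $u^{r_{k}}\to U_{\infty}$ in $C^{\infty}_{\loc}(\Rn\times(0,\infty))$, with $U_{\infty}(\cdot,0)=U_{0}$.

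The limit $U_{\infty}$ solves \eqref{12111} with initial data $U_{0}$, so the uniqueness statement of Theorem \ref{1.112} forces $U_{\infty}=U$. Since every subsequence admits a further subsequence with the same limit, the full family $u^{r}$ converges, and setting $t=1$ yields $t^{-1}u(\sqrt{t}x,t)=u^{\sqrt{t}}(x,1)\to U(x,1)$ as $t\to+\infty$. For the self-expanding property, observe that \eqref{161} forces $U_{0}(Rx)=R^{2}U_{0}(x)$ for every $R>0$; then $\tilde{U}(x,t):=R^{-2}U(Rx,R^{2}t)$ solves the same Cauchy problem as $U$, so by uniqueness $U(Rx,R^{2}t)=R^{2}U(x,t)$, i.e.\ $U(x,t)=tU(x/\sqrt{t},1)$. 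Substituting this self-similar form into $\partial_{t}U=F_{\tau}(D^{2}U)$ shows that $w(x):=U(x,1)$ satisfies \eqref{15}, so $w$ is a smooth self-expanding solution of \eqref{12}.

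The main obstacle is the step of passing to the limit: one must verify that the constants in Theorem \ref{1.112} genuinely do not depend on $r$ (they depend only on $n$, $l$, $\eta(n)$ and $1/\eps_{0}$, all unaffected by the rescaling), and one must control the convergence of $u^{r}(\cdot,0)$ strongly enough that the limit solution matches the data $U_{0}$ at $t=0$. Both difficulties are resolved by the scale invariance of Condition $L$ under $u\mapsto u^{r}$, which forces the Hessian bound to pass uniformly both to the blow-down family and to $U_{0}$ itself; once this is in place the remaining convergence and self-similarity arguments are routine.
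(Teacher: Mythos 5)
Your proposal is correct and follows essentially the same route as the paper: parabolic blow-down $u^r(x,t)=r^{-2}u(rx,r^2t)$, scale-invariance of Condition $L$, the $r$-uniform estimates from Theorem \ref{1.112} combined with boundedness of $u^r(0,\cdot)$ and $Du^r(0,\cdot)$, Arzel\`a--Ascoli, identification of the limit with $U$ via uniqueness, and the homogeneity $U_0(Rx)=R^2U_0(x)$ plus uniqueness to obtain the self-expanding property. The only cosmetic difference is that the paper packages the self-expanding step as a separate lemma (Lemma \ref{5.11}), whereas you prove it inline.
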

This paper is organized as follows. In Sect. \ref{2}, we show that the mean curvature flow \eqref{12} can be derived by the special Lagrangian parabolic equation \eqref{11}  and give the definition of self-expanding solution to \eqref{11}. In sect. \ref{3}, Theorem \ref{1.2} is proved, by Theorem \ref{1.2}, we can present the proof of Theorem \ref{1.3}. In Sect. \ref{6}, we will show that Theorem \ref{1.21} and Theorem \ref{1.312}. The proof of Theorem \ref{1.112} and \ref{1.31} are given in Sect. \ref{4}.
\section{lagrangian self-expanding solution in $(\mathbb{R}^{2n},g_\tau)$}\label{2}
Throughout the following Einstein’s convention of summation over repeated indices will be adopted (see \cite{huang2022entire}).\\
\indent Let $(x^{1},\cdots,x^{n};y^{1},\cdots,y^{n})$ be null coordinates in $\mathbb{R}^{2n}_{n}$. Let $g_{\tau}=\sin\tau\delta_{0}+\cos\tau g_{0}$ be the linear combination of the standard Euclidean metric
$$\delta_{0}=\sum_{i=1}^{n}dx_{i}\otimes dx_{i}+\sum_{j=1}^{n}dy_{j}\otimes dy_{j},$$
and the pseudo-Euclidean metric
$$g_{0}=\sum_{i=1}^{n}dx_{i}\otimes dy_{i}+\sum_{j=1}^{n}dy_{j}\otimes dx_{j}.$$
We denote, for a smooth function $u$
$$u_{i}=\frac{\partial u}{\partial x_{i}},\quad u_{ij}=\frac{\partial^{2} u}{\partial x_{i}\partial x_{j}},\quad u_{ijk}=\frac{\partial^{3} u}{\partial x_{i}\partial x_{j}\partial x_{j}},\quad \cdots.$$
  \begin{Prop}\label{prop2.1}
  	If \eqref{11} admit an admissible solution $u(x,t)$ on $\mathbb{R}^{n}\times(0,T)$, then there exists a family of diffeomorphisms
  	$\varphi_{t}:\mathbb{R}^{n}\to\mathbb{R}^{n}$ for $t\in(0,T)$ such that $Y(x,t)=(\varphi(x,t),\nabla u(\varphi(x,t),t))$ solves \eqref{12} on
  	$\mathbb{R}^{n} \times(0,T)$.
\end{Prop}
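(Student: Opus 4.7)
The plan is to start with the naive graph parametrization $\tilde Y(x,t):=(x,Du(x,t))$, isolate the normal component of its time-derivative with respect to the metric $g_\tau$, and then absorb the tangential component via a reparametrization $\varphi_t$ obtained from an ODE. With tangent frame $\partial_i\tilde Y=(e_i,u_{ij}e_j)$, the metric $g_\tau=\sin\tau\,\delta_0+\cos\tau\,g_0$ induces on the graph the Riemannian metric
\[ g_{ij}=\sin\tau(\delta_{ij}+u_{ik}u_{jk})+2\cos\tau\,u_{ij}, \]
which is positive definite under the admissibility assumption on $u$. Standard differential geometry then gives the mean curvature vector $\vec H=g^{ij}(\partial_i\partial_j\tilde Y)^{\perp_{g_\tau}}$, where $\perp_{g_\tau}$ denotes the $g_\tau$-orthogonal projection onto the normal bundle of the graph.

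The main technical step is to identify $\vec H$ with the normal projection of $\partial_t\tilde Y=(0,Du_t)=(0,DF_\tau(D^2u))$ when $u$ solves \eqref{11}. Concretely, I would verify the pointwise identity
\[ \partial_k F_\tau(D^2u) \;=\; g^{ij}u_{ijk} \]
in each regime $0<\tau<\pi/4$, $\tau=\pi/4$, $\pi/4<\tau<\pi/2$, by simultaneously diagonalizing $D^2u$ and $g^{ij}$ and matching eigenvalue-by-eigenvalue against $\partial F_\tau/\partial\lambda_i$. The specific scaling constants $\sqrt{a^2+1}/(2b)$, $\sqrt 2$, and $\sqrt{a^2+1}/b$ appearing in \eqref{11} are engineered precisely so that this identity holds. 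Since on a Lagrangian graph in $(\mathbb{R}^{2n},g_\tau)$ the normal bundle is parametrized by its $y$-component via $a=-(\sin\tau\,I+\cos\tau\,u)^{-1}(\cos\tau\,I+\sin\tau\,u)b$, matching $y$-components forces $(\partial_t\tilde Y)^{\perp_{g_\tau}}=\vec H$.

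Next, decompose $\partial_t\tilde Y=\vec H + T$ where $T=a^k(x,t)\,\partial_k\tilde Y$ is tangential, with smooth coefficients recovered from $g_{ik}a^k=\langle T,\partial_i\tilde Y\rangle_{g_\tau}$. Define $\varphi_t$ as the flow of the ODE $\partial_t\varphi^k=-a^k(\varphi,t)$ with $\varphi_0=\mathrm{id}$. Smoothness of the admissible $u$ on $\mathbb{R}^n\times(0,T)$ makes $a^k$ and hence $\varphi$ smooth; combining the inverse function theorem with standard ODE theory shows $\varphi_t$ is a diffeomorphism of $\mathbb{R}^n$ for each $t\in(0,T)$. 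Setting $Y(x,t):=\tilde Y(\varphi(x,t),t)$, a direct computation gives
\[ \partial_t Y=\partial_j\tilde Y\cdot\partial_t\varphi^j+\partial_t\tilde Y=-T+(\vec H+T)=\vec H, \]
which is exactly \eqref{12}.

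The hard part is the identity $\partial_k F_\tau(D^2u)=g^{ij}u_{ijk}$ of Step 2; this is precisely what makes the Lagrangian MCF in $(\mathbb{R}^{2n},g_\tau)$ correspond to the parabolic PDE \eqref{11}. Handling the three regimes uniformly calls for the substitution $\mu_i=(\lambda_i+a-b)/(\lambda_i+a+b)$, which converts $g^{ij}u_{ijk}$ into the derivative of a sum of logarithms when $0<\tau<\pi/4$ or arctangents when $\pi/4<\tau<\pi/2$; the case $\tau=\pi/4$ appears as a limiting form in which $-1/(1+\lambda_i)$ plays the role of the logarithm. A secondary but routine concern is global existence of the flow $\varphi_t$ on all of $\mathbb{R}^n$, which follows from boundedness of $a^k$ on each compact time interval, itself a consequence of smoothness of the admissible solution $u$.
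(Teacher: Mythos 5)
Your proposal follows essentially the same route as the paper: compute the induced metric $g_{ij}=\sin\tau(\delta_{ij}+u_{ik}u_{kj})+2\cos\tau\,u_{ij}$, identify $g^{ij}=\partial F_\tau/\partial u_{ij}$ (your identity $\partial_kF_\tau=g^{ij}u_{ijk}$ is precisely this statement combined with the chain rule), conclude $\vec H=(0,Du_t)^{\perp}$, and then absorb the tangential component of $\partial_t\tilde Y$ by solving the ODE $\partial_t\varphi^k=-a^k$. The paper simply writes the same ODE in matrix form, $(I_n,D^2u(\varphi_t,t))\cdot\frac{d\varphi_t}{dt}=-(0,Du_t(\varphi_t,t))^{\top}$, and asserts the inverse-metric identity without the regime-by-regime verification you sketch.
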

    \begin{proof}
    Let $e_{i}=(0,\cdots,1,\cdots,0)$ be the $i$-th coordinate vector in $\mathbb{R}^{2n},i=1,\cdots,2n$. Denote the inner product of $(\mathbb{R}^{2n},g_{\tau})$ by $<\cdot,\cdot>_{\tau}$. The tangent vector fields of $M_{t}=\{(x,Du(x,t))|x\in\mathbb{R}^{n}\}$ are spanned by
    $$E_{i}=e_{i}+u_{ij}e_{n+j},\quad i=1,\cdots,n.$$
    The induced metric $g$ on $M_{t}$ is
    \begin{align}
    		\begin{aligned}
    	g_{ij}&=<E_{i},E_{j}>_{\tau}\\
    	&=<e_{i}+u_{ik}e_{n+k},e_{j}+u_{jl}e_{n+l}>_{\tau}\\
    	&=\sin\tau(\delta_{ij}+u_{ik}u_{kj})+2\cos\tau u_{ij}.
    		\end{aligned}
    \end{align}
    Denote $(g_{ij})^{-1}$ = $g^{ij}$. It is not hard to see that
    $$g^{ij}=\frac{\partial F_{\tau}(D^{2}u)}{\partial u_{ij}}.$$
    Let $\overline{\nabla}$ denote the Levi–Civita connection of $g_{\tau}$. We have $\overline{\nabla}_{E_{j}}E_{i}=u_{ijk}e_{n+k}$. The mean curvature of $M_{t}$ is computed as
    \begin{align}
    		\begin{aligned}
    		\vec{H}&=g^{ij}(\overline{\nabla}_{E_{i}}E_{j})^{\perp}\\
    		&=(g^{ij}u_{ijk}e_{n+k})^{\perp}\\
    		&=(\partial_{k}F_{\tau}(D^{2}u)e_{n+k})^{\perp}.
    	\end{aligned}
    \end{align}
    By the evolution equation \eqref{11} of $u$,
    $$\vec{H}=(u_{tk}e_{n+k})^{\perp}=(0,Du_{t})^{\perp}.$$
    Take a family of diffeomorphisms $\varphi_{t}:\mathbb{R}^{n}\times [0,T)\to\mathbb{R}^{n}$ that satisfies
    \begin{align*}
    (I_{n},D^{2}u(\varphi_{t},t))\cdot\frac{d\varphi_{t}}{dt}=-(0,Du_{t}(\varphi_{t},t))^{\top},
    \end{align*}
    where $\cdot$ denotes matrix product and $\perp$ denotes the projection to the tangent bundle of $M_{t}$. Set $Y=(\varphi_{t},Du(\varphi_{t},t))$. It follows that
    \begin{align*}
    		\begin{aligned}
    	\frac{dY}{dt}&=\left(\frac{d\varphi_{t}}{dt},Du_{t}(\varphi_{t},t)+D^{2}u(\varphi_{t},t) \cdot \frac{d\varphi_{t}}{dt}\right)\\
    	&=(0,Du_{t}(\varphi_{t},t))^{\perp}\\
    	&=\vec{H}(Y)
    	\end{aligned}
    \end{align*}
    This completes the proof.
    \end{proof}
\begin{defn}
	We say $u(x,t)$ is a self-expanding solution to \eqref{11}, if $u(x,t)$ is a solution that
	satisfies
	\begin{align}\label{23}
		u(x,t)=tu\left(\frac{x}{\sqrt{t}},1\right),\quad \forall t>0.
	\end{align}
	\end{defn}
	\begin{Prop}
		If $u(x,t)$ is a self-expanding solution to \eqref{11}, then $u(x,1)$ satisfies \eqref{15}.
	\end{Prop}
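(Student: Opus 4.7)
The plan is to differentiate the self-expanding identity \eqref{23} with respect to $t$ and then evaluate at $t=1$, using \eqref{11} to identify $\partial_t u(x,1)$ with $F_\tau(D^2 u(x,1))$.

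More concretely, I would set $v(x) := u(x,1)$ and rewrite \eqref{23} as $u(x,t) = t\,v(x/\sqrt{t})$. A direct calculation then gives the three basic relations
\begin{align*}
\partial_t u(x,t) &= v\!\left(\tfrac{x}{\sqrt{t}}\right) - \tfrac{1}{2\sqrt{t}}\,\bigl\langle x, Dv\!\left(\tfrac{x}{\sqrt{t}}\right)\bigr\rangle,\\
Du(x,t) &= \sqrt{t}\,Dv\!\left(\tfrac{x}{\sqrt{t}}\right),\\
D^{2}u(x,t) &= D^{2}v\!\left(\tfrac{x}{\sqrt{t}}\right).
\end{align*}
Setting $t=1$ in the first identity yields $\partial_t u(x,1) = u(x,1) - \tfrac{1}{2}\langle x, Du(x,1)\rangle$, while the third identity trivially gives $F_\tau(D^{2}u(x,1)) = F_\tau(D^{2}v(x))$.

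Finally, since $u$ solves \eqref{11}, we have $\partial_t u(x,1) = F_\tau(D^{2}u(x,1))$. Combining this with the two identities above produces exactly \eqref{15} for $u(x,1)$, completing the proof. There is no real obstacle here: the only thing to watch is the chain rule bookkeeping, especially that the factor $t$ in front of $v$ and the factor $t^{-1}$ from differentiating $D^2$ in the argument cancel so that $F_\tau(D^2u)$ is scale-invariant in the right way, and that the $-\tfrac{1}{2}\langle x, Du\rangle$ term comes out with the correct sign and coefficient from the inner $t^{-1/2}$ rescaling.
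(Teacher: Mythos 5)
Your proposal is correct and follows essentially the same route as the paper: differentiate the self-expanding identity $u(x,t)=t\,u(x/\sqrt t,1)$ in $t$, use the PDE \eqref{11} to replace $\partial_t u$ by $F_\tau(D^2u)$, and set $t=1$. Your version is slightly more explicit about the intermediate scaling identities for $Du$ and $D^2u$, but the content is identical.
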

\begin{proof} By \eqref{11} and \eqref{23}, one can easily check that
\begin{align}
\begin{aligned}
F_{\tau}(D^{2}u(x,t))&=\frac{\partial u(x,t)}{\partial t}\\
&=u\left(\frac{x}{\sqrt{t}},1\right)-\frac{1}{2}\left<Du\left(\frac{x}{\sqrt{t}},1\right),\frac{x}{\sqrt{t}}\right>.
\end{aligned}
\end{align}
Taking $t=1$, we see that $u(x,1)$ satisfies \eqref{15}.
\end{proof}
\section{proof of Theorem\ref{1.2} and \ref{1.3}}\label{3}
We want to use the method of continuity and finite approximation to prove the solvability
of \eqref{121}.
\begin{defn}
	Given any $T>0$, $R>0$, $0<\alpha<1$ and set
	$$B_{R}=\{x\mid|x|<R,x\in\mathbb{R}^{n}\},\quad B_{R,T}=\{x\mid|x|<R,x\in\mathbb{R}^{n}\}\times(0,T),$$
	$$PB_{R,T}=B_{R}\times\{t=0\}\cup\partial B_{R}\times(0,T).$$
	Let $\gamma\in[0,1]$. We say $u\in C^{5+\alpha,\frac{5+\alpha}{2}}(B_{R}\times(0,T))\cap C(\overline{B}_{R}\times[0,T))$ is a solution to \eqref{25} if $u$ satisfies
	\begin{align}\label{25}
		\begin{cases}
			\frac{\partial u}{\partial t}-\gamma F_{\frac{\pi}{4}}(D^{2}u)-(1-\gamma)\Delta u=0,\quad(x,t)\in B_{R,T},\\
			u=u_{0}(x), \quad\qquad\qquad\qquad\qquad\quad\;\;(x,t)\in PB_{R,T}.
		\end{cases}
	\end{align}
\end{defn}
Clearly, there exists a unique solution $u(x, t)$ which satisfies \eqref{25} with $\gamma=0$. Let
$$K=\{\gamma\in[0,1]:\eqref{25}\ has\ a\ solution\}.$$
 To prove that the classical solutions to \eqref{121} satisfy the property, we need the following lemma which is proved by P.L.Lions, M.Musiela (cf. Theorem 3.1 of \cite{lions2006convexity},).
 \begin{lemma}\label{2.5}
 	Let $u:B_{R,T}\to \mathbb{R}$ be a classical solution of a fully nonlinear equation of the form
 	$$\frac{\partial u}{\partial t}=F(D^{2}u),$$
 	where $F$ is a $C^{2}$ function defined on the cone $\Gamma$ of definite symmetric matrices, which is monotone increasing (that is, $F(A)\le (A + B)$ whenever $B$ is a positive definite matrix), and the function
 	$$F^{*}(A)=-F(A^{-1})$$
 	is concave on $\Gamma_{+}$ of positive definite symmetric matrices. If $D^{2}u\ge0$ for $(x,t)\in PB_{R,T}$, then $D^{2}u\ge0$ for $(x,t)\in B_{R,T}$.
 \end{lemma}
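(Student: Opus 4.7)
My plan is to apply the parabolic maximum principle to the reciprocal Hessian $B:=(D^{2}u)^{-1}$, using the concavity of $F^{*}$ precisely to cancel the sign-indefinite $F^{kl,mn}$ term that appears in the evolution equation for $B$. To make $B$ well defined I would first reduce to a strictly positive regime: for each $\epsilon>0$, consider the classical solution $u^{\epsilon}$ of the same equation on $B_{R,T}$ with initial and boundary data perturbed to $u_{0}+\tfrac{\epsilon}{2}|x|^{2}$. Then $D^{2}u^{\epsilon}\ge \epsilon I$ on $PB_{R,T}$, and by well-posedness and stability of the Dirichlet problem $u^{\epsilon}\to u$ as $\epsilon\to 0^{+}$, so it suffices to prove $D^{2}u^{\epsilon}\ge \epsilon I$ on $B_{R,T}$ and pass to the limit.

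Next, dropping the superscript and working where $A:=D^{2}u$ is positive definite, I would differentiate $\partial_{t}u=F(A)$ twice in space and combine with the identities $\partial_{t}B=-B(\partial_{t}A)B$ and $\partial_{k}B=-B(\partial_{k}A)B$ to derive the evolution equation for $v(x,t):=\eta^{i}\eta^{j}B_{ij}(x,t)$, with $\eta\in\mathbb{R}^{n}$ a fixed unit vector. A routine calculation shows that the $F^{kl}u_{klpq}$ contributions cancel between $\partial_{t}B_{ij}$ and $F^{kl}\partial_{k}\partial_{l}B_{ij}$, leaving
\[
\partial_{t}v-F^{kl}(A)\,v_{kl}=-F^{kl,mn}(A)\,\tilde{M}_{kl}\tilde{M}_{mn}-2F^{kl}(A)\,(\tilde{M}B\tilde{M})_{kl},
\]
where $\tilde{M}_{kl}:=u_{kls}(B\eta)^{s}$.

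The punchline is the concavity hypothesis. Writing out $(F^{*})''(A^{-1})[H,H]\le 0$ and reparametrizing $M:=AHA$, which is a bijection on the space of symmetric matrices, the inequality becomes
\[
F^{kl,mn}(A)\,M_{kl}M_{mn}+2F^{kl}(A)\,(MBM)_{kl}\ge 0
\]
for every symmetric $M$. Specializing $M=\tilde{M}$ gives exactly $\partial_{t}v-F^{kl}(A)v_{kl}\le 0$. Since monotonicity of $F$ makes $(F^{kl})$ positive semi-definite, the parabolic maximum principle yields $v(x,t)\le \sup_{PB_{R,T}}v\le 1/\epsilon$ for each fixed unit $\eta$; taking the supremum over $\eta$ gives $\lambda_{\max}(B)\le 1/\epsilon$, i.e.\ $D^{2}u^{\epsilon}\ge \epsilon I$ throughout $B_{R,T}$. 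Letting $\epsilon\to 0^{+}$ then concludes.

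The hard part will not be the interior calculation --- the cancellation is the entire point of the hypothesis --- but the approximation and limit step: passing $\epsilon\to 0^{+}$ requires stability of the classical solution $u^{\epsilon}$, including convergence of its Hessian, in a topology strong enough to preserve $D^{2}u^{\epsilon}\ge 0$ in the limit. This rests on well-posedness and uniform estimates for the Dirichlet problem that are delicate when the initial and boundary data are only positive semi-definite.
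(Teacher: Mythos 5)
The paper does not prove this lemma itself: it is invoked verbatim as Theorem~3.1 of Lions--Musiela \cite{lions2006convexity}, so there is no in-paper argument to compare yours against. Evaluated on its own, your computation is sound: the cancellation of the $F^{kl}u_{klpq}$ terms between $\partial_t B$ and $F^{kl}\partial_k\partial_l B$ is correct, and writing out $(F^{*})''(B)[H,H]\le 0$ with the substitution $M=AHA$ does give exactly $F^{kl,mn}(A)M_{kl}M_{mn}+2F^{kl}(A)(MBM)_{kl}\ge 0$, which after specializing $M=\tilde M$ closes the differential inequality $\partial_t v-F^{kl}v_{kl}\le 0$. That is indeed the content of the hypothesis, and monotonicity of $F$ makes $(F^{kl})$ positive semi-definite so the (degenerate) parabolic maximum principle applies.

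There is, however, a genuine gap beyond the one you flagged, and it is of a circular nature. Your maximum-principle argument needs $B=(D^2u^\epsilon)^{-1}$ to be defined and finite throughout $B_{R,T}$, and needs the evolution equation for $B$ to hold there --- but positivity of $D^2u^\epsilon$ in the interior is precisely what you are trying to conclude. You silently use the output as an input. The standard repair is a continuity-in-time argument: let $T^{*}$ be the supremum of $t'\le T$ such that $D^2u^\epsilon>\tfrac{\epsilon}{2}I$ on $B_R\times[0,t')$; this set is nonempty by continuity at $t=0$, the maximum principle applied on $B_R\times[0,T^{*})$ yields the stronger bound $D^2u^\epsilon\ge\epsilon I$ there, which extends to $t=T^{*}$ by continuity and then past $T^{*}$, forcing $T^{*}=T$. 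This is short but cannot be omitted, since otherwise the quantity $v$ is not even known to be finite where you invoke the maximum principle. The other weak point is the one you already identified: passing $\epsilon\to 0^{+}$ requires well-posedness and $C^2$ stability of the Cauchy--Dirichlet problem under perturbation of the data, which is not free for a fully nonlinear equation and should at minimum be cited. Note also that Lions--Musiela work directly with the original solution rather than perturbing the data, so if you want to match the precise hypotheses of the lemma you may find their route more economical than the approximation scheme.
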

\indent The following Lemma can be deduced from the work (cf. \cite{huang2011lagrangian}).
 \begin{lemma}\label{3.31}
	Suppose that $u:B_{R,T}\to \mathbb{R}$ be a classical solution of a fully nonlinear equation of the form
	\begin{align*}
		\frac{\partial u}{\partial t}=F(D^{2}u)
	\end{align*}
	where $F$ satisfies the conditions in Lemma \ref{2.5} and $F$ is concave on the cone $\Gamma_{+}$. If $\mu I\le D^{2}u\le \Lambda I$ for $(x,t)\in PB_{R,T}$, where $\mu$ and $\Lambda$ are positive constants. Then $\mu I\le D^{2}u\le \Lambda I$ for $(x,t)\in B_{R,T}.$
\end{lemma}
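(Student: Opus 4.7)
The plan is to prove the two inequalities separately, handling the upper bound $D^{2}u\le\Lambda I$ by differentiating the equation once along an arbitrary direction and invoking the parabolic maximum principle, and handling the lower bound $D^{2}u\ge\mu I$ by a shift that reduces matters to the convexity-preservation statement of Lemma~\ref{2.5}. Throughout, since $\mu I\le D^{2}u\le\Lambda I$ on $PB_{R,T}$ we have $D^{2}u>0$ on the parabolic boundary, so a first application of Lemma~\ref{2.5} keeps $D^{2}u$ in $\Gamma_{+}$ inside $B_{R,T}$ and the operator $F^{ij}(D^{2}u)=\partial F/\partial u_{ij}$ is well-defined.

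For the upper bound, fix an arbitrary unit direction $\xi\in\mathbb{R}^{n}$ and differentiate $u_{t}=F(D^{2}u)$ twice along $\xi$, which gives
\[
(u_{\xi\xi})_{t}-F^{ij}(D^{2}u)(u_{\xi\xi})_{ij}=F^{ij,kl}(D^{2}u)\,u_{ij\xi}u_{kl\xi}.
\]
Concavity of $F$ on $\Gamma_{+}$ is precisely the statement that the bilinear form $F^{ij,kl}$ is negative semidefinite on symmetric tensors, so the right-hand side is nonpositive. Hence $u_{\xi\xi}$ is a classical subsolution of a linear parabolic equation in non-divergence form, and the parabolic maximum principle yields $u_{\xi\xi}(x,t)\le \max_{PB_{R,T}}u_{\xi\xi}\le\Lambda$. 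Taking $\xi$ to be a unit eigenvector of $D^{2}u(x,t)$ for the largest eigenvalue gives $\lambda_{\max}(D^{2}u(x,t))\le\Lambda$ for every $(x,t)\in B_{R,T}$.

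For the lower bound, set $v(x,t):=u(x,t)-\tfrac{\mu}{2}|x|^{2}$, so that $D^{2}v=D^{2}u-\mu I$ and $D^{2}v\ge 0$ on $PB_{R,T}$. Then $v$ solves $v_{t}=\tilde F(D^{2}v)$ with $\tilde F(M):=F(M+\mu I)$ on the cone $\{M:M+\mu I\in\Gamma_{+}\}\supset\Gamma_{+}$, and the claim reduces to checking that $\tilde F$ meets the hypotheses of Lemma~\ref{2.5}: monotonicity, which is immediate from monotonicity of $F$, and concavity on $\Gamma_{+}$ of $\tilde F^{*}(A)=-F(A^{-1}+\mu I)$. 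The main obstacle is exactly this last point: matrix inversion $A\mapsto A^{-1}$ is matrix-convex on $\Gamma_{+}$ and $F$ is monotone concave, but these push in opposing directions so the naive composition argument does not close. One verifies concavity of $\tilde F^{*}$ directly from the spectral representation of $F$, writing $\tilde F^{*}(A)=\operatorname{tr} f(A)$ for a scalar function $f(\lambda)$ built from the explicit formula for $F_{\pi/4}$, and checking $f''\le 0$ on $(0,\infty)$ by a one-variable computation (alternatively, one redoes the Lions--Musiela maximum-principle argument for the shifted symmetric cone $\{D^{2}u\ge\mu I\}$ built in from the start, as in \cite{huang2011lagrangian}). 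With $\tilde F^{*}$ concave, Lemma~\ref{2.5} applied to $v$ delivers $D^{2}v\ge 0$ on $B_{R,T}$, that is $D^{2}u\ge\mu I$, completing the proof.
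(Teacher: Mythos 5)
The paper does not supply a proof for this lemma; it defers entirely to \cite{huang2011lagrangian}, so your argument can only be assessed on its own merits rather than compared against an in-text proof. The upper-bound half is complete and correct: differentiating $u_t=F(D^2u)$ twice along a fixed direction $\xi$ gives $(u_{\xi\xi})_t-F^{ij}(u_{\xi\xi})_{ij}=F^{ij,kl}u_{ij\xi}u_{kl\xi}\le 0$ by concavity of $F$, and the maximum principle together with $\lambda_{\max}(D^2u)=\sup_{|\xi|=1}u_{\xi\xi}$ gives $D^2u\le\Lambda I$.

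For the lower bound you correctly reduce to checking that $\tilde{F}^*(A):=-F(A^{-1}+\mu I)$ is concave on $\Gamma_+$, but you are more pessimistic about that check than you need to be, and the ``one-variable spectral computation for $F_{\pi/4}$'' you fall back on would prove the lemma only in the special case used downstream, not at the generality at which it is stated. The general check closes cleanly if you reorganize the composition. Write $\tilde{F}^*(A)=F^*\bigl((A^{-1}+\mu I)^{-1}\bigr)$, using $F^*(B)=-F(B^{-1})$ with $B=(A^{-1}+\mu I)^{-1}$. The map $\Phi(A):=(A^{-1}+\mu I)^{-1}=\tfrac{1}{\mu}I-\tfrac{1}{\mu}(I+\mu A)^{-1}$ is operator monotone and operator concave on $\Gamma_+$, since $A\mapsto(I+\mu A)^{-1}$ is operator convex and antitone. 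Moreover $F^*$ is by hypothesis concave on $\Gamma_+$, and it is increasing in the Loewner order because $F$ is increasing and inversion is antitone. A concave, Loewner-increasing scalar functional composed with an operator-concave, operator-monotone map is concave: for $\theta\in[0,1]$,
\begin{align*}
F^*\bigl(\Phi(\theta A+(1-\theta)B)\bigr)\ge F^*\bigl(\theta\Phi(A)+(1-\theta)\Phi(B)\bigr)\ge \theta F^*(\Phi(A))+(1-\theta)F^*(\Phi(B)).
\end{align*}
Hence $\tilde{F}^*$ is concave, Lemma~\ref{2.5} applies to $v=u-\tfrac{\mu}{2}|x|^2$, and the lower bound follows with no case-by-case computation. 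Inserting this observation turns your proposal into a complete proof of the lemma in full generality.
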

\indent Next we show that
 \begin{lemma}\label{2.6}
 	Suppose that $u:B_{R,T}\to \mathbb{R}$ be a classical solution of a fully nonlinear equation of the form
 	\begin{align*}
 	\frac{\partial u}{\partial t}=F_{\frac{\pi}{4}}(D^2u)	
 	\end{align*}
  If $(-1+\zeta)I\le D^{2}u\le\varrho I$ for $(x,t)\in PB_{R,T}$, then $(-1+\zeta)I\le D^{2}u\le \varrho I$ for $(x,t)\in B_{R,T}.$
 \end{lemma}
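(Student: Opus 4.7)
My plan is to reduce Lemma \ref{2.6} to the already available Lemma \ref{3.31} by a linear shift in the Hessian that turns the two-sided bound $(-1+\zeta)I \le D^{2}u \le \varrho I$ into a positive definite two-sided bound. I set
\begin{align*}
v(x,t) := u(x,t) + \frac{1}{2}|x|^{2},
\end{align*}
so that $D^{2}v = D^{2}u + I$. On $PB_{R,T}$ the hypothesis translates to $\zeta I \le D^{2}v \le (\varrho+1)I$, which in particular places $D^{2}v$ inside the positive definite cone $\Gamma_{+}$. Since the eigenvalues of $D^{2}v$ are $1 + \lambda_{i}(D^{2}u)$, a direct computation gives
\begin{align*}
\frac{\partial v}{\partial t} = F_{\frac{\pi}{4}}(D^{2}u) = -\sqrt{2}\sum_{i=1}^{n}\frac{1}{1+\lambda_{i}(D^{2}u)} = -\sqrt{2}\,\mathrm{tr}\bigl((D^{2}v)^{-1}\bigr) =: G(D^{2}v),
\end{align*}
so $v$ solves a fully nonlinear parabolic equation whose symbol $G$ is defined on $\Gamma_{+}$.

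Next I will verify that $G$ meets the structural conditions of Lemma \ref{3.31}. Monotonicity is immediate: for $A,B \in \Gamma_{+}$, $A+B \succeq A$ forces $(A+B)^{-1} \preceq A^{-1}$, hence $G(A+B) \ge G(A)$. Concavity of $G$ on $\Gamma_{+}$ reduces to the standard matrix convexity of $M \mapsto \mathrm{tr}(M^{-1})$ on positive definite matrices. Finally, the dual symbol satisfies $G^{*}(A) := -G(A^{-1}) = \sqrt{2}\,\mathrm{tr}(A)$, which is linear in $A$ and therefore concave on $\Gamma_{+}$. Applying Lemma \ref{3.31} to $v$ with $\mu = \zeta$ and $\Lambda = \varrho + 1$ then propagates the bound $\zeta I \le D^{2}v \le (\varrho+1)I$ from $PB_{R,T}$ to all of $B_{R,T}$; reading this back through $D^{2}v = D^{2}u + I$ yields $(-1+\zeta)I \le D^{2}u \le \varrho I$ on $B_{R,T}$, as desired.

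The substantive step is really just the first one: recognizing that the correct change of variable to kill the $1+\lambda_{i}$ denominators in $F_{\frac{\pi}{4}}$ is the shift $v = u + \tfrac{1}{2}|x|^{2}$, after which the resulting symbol $G(M) = -\sqrt{2}\,\mathrm{tr}(M^{-1})$ is of precisely the Monge--Amp\`ere-type form (monotone, concave on $\Gamma_{+}$, with concave dual) already handled by Lemma \ref{3.31}. No additional maximum principle argument beyond Lemma \ref{3.31} is needed, and no convexity assumption on $u_{0}$ itself is used --- the shift moves all the convexity into $v$.
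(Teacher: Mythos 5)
Your proof is correct and follows essentially the same route as the paper: shift $v = u + \tfrac{1}{2}|x|^{2}$ to move $D^{2}u$ into the positive cone, recognize that $F_{\pi/4}$ becomes $-\sqrt{2}\,\mathrm{tr}(M^{-1})$, and invoke Lemma \ref{3.31} with $\mu=\zeta$, $\Lambda=\varrho+1$. You are slightly more explicit than the paper in checking monotonicity and the concavity of the dual symbol $G^{*}(A)=\sqrt{2}\,\mathrm{tr}(A)$, but the substance is identical.
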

\begin{proof}
Let $\bar{u}=u+\frac{1}{2}\vert x\vert^2$, then we have
\begin{align*}
D^2\bar{u}=D^2u+I
\end{align*}
It follows that $D^2\bar{u}>0$ for $(x,t)\in PB_{R,T}$, and $\bar{u}$ satisfies
\begin{align*}
\frac{\partial \bar{u}}{\partial t}=\overline{F}(D^2\bar{u})
\end{align*}
Define
\begin{align*}
\begin{aligned}
&F(\lambda_i)=-\sqrt{2}\sum_{i=1}^{n}\frac{1}{1+\lambda_{i}},\\
&\overline{F}(\bar{\lambda}_i)=-\sqrt{2}\sum_{i=1}^{n}\frac{1}{\bar{\lambda}_{i}}.
\end{aligned}
\end{align*}
and $\overline{F}(\bar{\lambda}_i)$ is concave on a cone $\Sigma=\{\bar{\lambda}_{1}>0,\bar{\lambda}_{2}>0,\cdots,\bar{\lambda}_{n}>0\}.$ So  $\overline{F}(D^2\bar{u})$ is concave on $\Gamma_+$. Thus according to Lemma \ref{3.31}, we have $\mu I\le D^{2}\bar{u}\le \Lambda I$ for $(x,t)\in B_{R,T}.$ That is $\mu I\le D^{2}u+I\le \Lambda I$ for $(x,t)\in B_{R,T}.$ Further, we get $(\mu-1) I\le D^{2}u\le (\Lambda-1) I$ for $(x,t)\in B_{R,T}.$\\
Next, we take $\mu=\zeta$, $\Lambda=\varrho+1$, so $(-1+\zeta)I\le D^{2}u\le \varrho I$ for $(x,t)\in B_{R,T}.$
Thus Lemma \ref{2.6} is proved.
\end{proof}
 Given $x_{0}\in\mathbb{R}^{n}$, $\kappa>0$, define
 $$Q_{R,x_{0}}=\{x\mid|x-x_{0}|\le R\}\times[\kappa,\kappa+R),\quad Q_{\frac{R}{2},x_{0}}=\left\{x\mid|x-x_{0}|\le \frac{R}{2}\right\}\times\left[\kappa+\frac{R}{4},\kappa+\frac{R}{2}\right),$$
 $$Q_{\frac{R}{3},x_{0}}=\left\{x\mid|x-x_{0}|\le \frac{R}{3}\right\}\times\left[\kappa+\frac{R}{3},\kappa+\frac{5R}{12}\right),\quad B_{R,x_{0}}=\{x\mid|x-x_{0}|\le R\}.$$
 \indent The following two lemmas which will be mentioned below may be used repeatedly (cf. \cite{lieberman1996second}).\\
 \begin{lemma}[\label{2.7}cf. Lemma 14.6 of \cite{lieberman1996second}]
 Let $u:\mathbb{R}^{n}\times[0,T)\to \mathbb{R}$ be a classical solution of a fully nonlinear equation of the form
 \begin{align*}
 \begin{cases}
 \frac{\partial u}{\partial t}=F(D^{2}u), \quad t>0,x\in \mathbb{R}^{n},\\
\;\;u=u_{0}(x), \qquad t=0,x\in \mathbb{R}^{n}.
\end{cases}
 \end{align*}
where $F$ is a $C^{2}$ concave function defined on the cone $\Gamma$ of definite symmetric matrices, which is monotone increasing with
$$\mu I\le\frac{\partial F}{r_{ij}}\le \Lambda I.$$
Then there exists $0<\alpha<1$ such that
$$[D^{2}u]_{C^{\alpha,\frac{\alpha}{2}\left(\overline{Q}_{\frac{1}{2},x_{0}}\right)}}\le C_{1}|D^{2}u|_{C^{0}(\overline{Q}_{1,x_{0}})},$$
where $\alpha$, $C_{1}$ are positive constants depending only on $n$, $\mu$, $\Lambda$, $\frac{1}{\kappa}$.
 \end{lemma}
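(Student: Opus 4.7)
The plan is to prove the claimed $C^{\alpha,\alpha/2}$ estimate for $D^2u$ via the parabolic Evans--Krylov theorem, following the line of Lemma 14.6 of Lieberman: one reduces to an oscillation decay estimate for pure second derivatives on parabolic cylinders, then iterates. The two crucial inputs are the concavity of $F$ (to turn second derivatives of $u$ into sub-solutions of a linearized equation) and the Krylov--Safonov weak parabolic Harnack inequality (to convert one-sided bounds into oscillation decay).

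First I would differentiate the equation once in an arbitrary direction $\xi\in\mathbb{R}^n$ to get
\[ \partial_t u_\xi - F^{ij}(D^2u)\,D_{ij}u_\xi = 0, \]
and differentiating a second time yields
\[ \partial_t u_{\xi\xi} - F^{ij}(D^2u)\,D_{ij}u_{\xi\xi} = F^{ij,kl}(D^2u)\,u_{\xi ij}\,u_{\xi kl}. \]
Concavity of $F$ forces the right-hand side to be non-positive, so $w:=u_{\xi\xi}$ is a sub-solution of the linear uniformly parabolic operator $L:=\partial_t-a^{ij}D_{ij}$ with $a^{ij}=F^{ij}(D^2u)$ satisfying $\mu I\le (a^{ij})\le \Lambda I$ by hypothesis. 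Thus every pure second derivative is sub-caloric for a common linear operator with measurable, uniformly elliptic coefficients.

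Next I would apply the Krylov--Safonov weak parabolic Harnack inequality for $L$ on the cylinder $Q_{R,x_0}$ to the non-negative super-caloric function $M_0-w$, where $M_0:=\sup_{Q_{R,x_0}}u_{\xi\xi}$. This yields a one-sided oscillation decay
\[ M_0-\sup_{Q_{R/2,x_0}}u_{\xi\xi}\le \theta\,\bigl(M_0-\inf_{Q_{R,x_0}}u_{\xi\xi}\bigr),\qquad \theta=\theta(n,\mu,\Lambda)\in(0,1). \]
To upgrade this into control of the oscillation from both sides, I would invoke the Evans--Krylov device: using the pointwise concavity inequality $F(A)-F(B)\ge F^{ij}(A)(A_{ij}-B_{ij})$, one compares $D^2u$ at two points and shows that an appropriate supremum of $u_{\xi\xi}$ over a suitably chosen finite family of directions $\{\xi_k\}$ (determined by the concave function $F$) is itself super-caloric for the same operator $L$. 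A second application of Krylov--Safonov to this supremum then produces matching control from above.

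The main obstacle is exactly this Evans--Krylov step: manufacturing simultaneous upper and lower control on the oscillation of the full Hessian $D^2u$ from only one-sided sub-caloric information about each individual pure second derivative. Once the combined decay $\mathrm{osc}_{Q_{R/2,x_0}}D^2u\le \theta\,\mathrm{osc}_{Q_{R,x_0}}D^2u$ is established, a standard dyadic iteration over cylinders $Q_{2^{-k}R,x_0}$ delivers a Hölder exponent $\alpha=\log(1/\theta)/\log 2$ together with the estimate
\[ [D^2u]_{C^{\alpha,\alpha/2}(\overline{Q}_{1/2,x_0})}\le C_1\,\|D^2u\|_{C^0(\overline{Q}_{1,x_0})}. \]
The dependence of $\alpha$ and $C_1$ only on $n,\mu,\Lambda$ and $1/\kappa$ comes from parabolic rescaling of the cylinders $Q_{R,x_0}$ to unit size, which preserves the ellipticity bounds and introduces only the factor $1/\kappa$ through the lower time bound $t\ge\kappa$ needed to apply the interior Harnack inequality.
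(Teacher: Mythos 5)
The paper does not prove this lemma; it cites it verbatim from Lieberman's book as the parabolic Evans--Krylov interior estimate, so there is no internal proof to compare against. Your reconstruction has the right skeleton: twice differentiating gives $u_{\xi\xi}$ sub-caloric for $L=\partial_t-F^{ij}(D^2u)D_{ij}$; weak Harnack for $L$ supplies one-sided information; concavity closes the oscillation on both sides; dyadic iteration gives the exponent. But several of the middle steps are not quite right. The displayed one-sided decay is not what weak Harnack gives: applying it to the nonnegative super-caloric function $M_0-u_{\xi\xi}$ yields a \emph{lower} bound on $\inf_{Q_{R/2}}(M_0-u_{\xi\xi})=M_0-\sup_{Q_{R/2}}u_{\xi\xi}$ in terms of an $L^p$ average of $M_0-u_{\xi\xi}$, not the upper bound you wrote (which holds trivially with $\theta=1$). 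Next, a supremum of sub-caloric functions is sub-caloric, not super-caloric, so the auxiliary function you describe cannot be handed to weak Harnack a second time; the actual Evans--Krylov closure is a pigeonhole over a finite family of directions $\xi_k$ covering the sphere, in which concavity and uniform ellipticity force some $u_{\xi_k\xi_k}$ to lie close to $M_0$ near a point where the Hessian oscillation is attained, and one then invokes the weak Harnack estimate for that particular $k$. Finally, and most substantively for the parabolic case, the right-hand side of your concavity inequality with $A=D^2u$ at one space-time point and $B$ at another equals $u_t$ evaluated at those two points and is not zero; it must be absorbed before the Hessian argument closes, and the standard remedy --- observe that $v=u_t$ solves the linear equation $\partial_t v-a^{ij}D_{ij}v=0$ and therefore has its own interior H\"older estimate from Krylov--Safonov, which is proved first and then fed into the Hessian oscillation estimate --- is precisely the ingredient separating the parabolic Evans--Krylov proof from its elliptic ancestor, and it is absent from your proposal.
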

 \begin{lemma}[\label{2.8}cf. Theorem 4.9 of \cite{lieberman1996second}]
 Let $v:\mathbb{R}^{n}\times[0,T)\to \mathbb{R}$ be a classical solution of a linear parabolic equation of the form
 \begin{align*}
 	\begin{cases}
 		\frac{\partial v}{\partial t}-a^{ij}v_{ij}=0,\quad t>0,x\in \mathbb{R}^{n},\\
 		v=v_{0}(x), \qquad\quad \;t=0,x\in \mathbb{R}^{n}.
 	\end{cases}
 \end{align*}
 where there exists a positive constant $C_{2}$ such that
 $$\mu I\le a^{ij}\le \Lambda I,\quad[a^{ij}]_{C^{\alpha}\left(\overline{Q}_{\frac{R}{2},x_{0}}\right)}\le C_{2}.$$
 Then there holds
 $$|Dv|_{C^{0}\left(\overline{Q}_{\frac{R}{3},x_{0}}\right)}+|D^{2}v|_{C^{0}\left(\overline{Q}_{\frac{R}{3},x_{0}}\right)}+[D^{2}v]_{C^{\alpha,\frac{\alpha}{2}\left(\overline{Q}_{\frac{R}{3},x_{0}}\right)}}\le C_{3}|v|_{C^{0}(\overline{Q}_{R,x_{0}})},$$
 where $C_{3}$ is a positive constants depending only on $n$, $\mu$, $\Lambda$, $C_{2}$, $R$, $\frac{1}{\kappa}$.
 \end{lemma}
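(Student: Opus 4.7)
The statement is a standard interior Schauder estimate for a homogeneous linear parabolic equation with Hölder-continuous coefficients, and my plan is to follow the classical frozen-coefficient route combined with Campanato's characterization of $C^{\alpha,\alpha/2}$ spaces. As preparation I would use the weak maximum principle (available since the equation is linear and homogeneous) to propagate the $C^0$ bound from $\overline{Q}_{R,x_0}$ onto a slightly smaller cylinder, then invoke the Krylov--Safonov interior Hölder estimate to get $v \in C^{\beta,\beta/2}$ on an intermediate cylinder, and finally apply a parabolic $L^p$ Calderón--Zygmund estimate (using the continuity of $a^{ij}$) to obtain interior $W^{2,p}$ bounds for any $p<\infty$. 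Sobolev embedding then gives $v \in C^{1,\gamma}$ with all constants controlled by $|v|_{C^0(\overline{Q}_{R,x_0})}$ and the parameters listed in the lemma; this already yields the first two terms on the left-hand side of the claimed inequality.

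The core step is the upgrade to $C^{2,\alpha}$ on $\overline{Q}_{R/3,x_0}$. For each $y \in \overline{Q}_{R/3,x_0}$ and each scale $r \ll R$, freeze the coefficients at $y$ and let $w$ solve the constant-coefficient parabolic equation $w_t - a^{ij}(y) w_{ij} = 0$ with the same parabolic boundary values as $v$ on $Q_{r,y}$. After diagonalising $a^{ij}(y)$ by a linear change of variables, the standard Campanato-type decay for constant-coefficient parabolic equations gives a $(\rho/r)^{n+2+2\alpha}$ improvement for the oscillation of $D^2 w$ on the smaller cylinders $Q_{\rho,y}$. The perturbation $\varphi = v - w$ satisfies $\varphi_t - a^{ij}(y)\varphi_{ij} = [a^{ij}(x) - a^{ij}(y)]\,v_{ij}$ with zero parabolic boundary values, so the parabolic maximum principle---together with the earlier $L^\infty$ bound on $v_{ij}$ and the hypothesis $[a^{ij}]_{C^\alpha} \le C_2$---yields an $L^\infty$ bound on $\varphi$ of order $r^{2+\alpha}$ times $|v|_{C^0(\overline{Q}_{R,x_0})}$. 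Combining the two estimates, iterating over a geometric sequence of scales, and invoking Campanato's theorem then produces the desired $[D^2 v]_{C^{\alpha,\alpha/2}(\overline{Q}_{R/3,x_0})}$ bound.

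The main obstacle, were the argument carried out from scratch rather than cited, is the careful bookkeeping of parabolic scaling---the natural distance is $d((x,t),(y,s)) = \max\{|x-y|, |t-s|^{1/2}\}$, which forces the $\tfrac{\alpha}{2}$ exponent in time---together with the propagation of constants through the bootstrap $C^0 \to C^{\beta} \to W^{2,p} \to C^{1,\gamma} \to C^{2,\alpha}$, so that the final constant depends only on $n,\mu,\Lambda,C_2,R,\tfrac{1}{\kappa}$. Since this is by now entirely standard parabolic Schauder theory, my actual plan is simply to appeal to Theorem 4.9 of Lieberman, which is precisely the cited reference.
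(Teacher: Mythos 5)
The paper offers no proof of this lemma at all: it is stated verbatim as a citation of Theorem~4.9 in Lieberman's book, which is exactly what you conclude with. Your final plan---simply invoking that theorem---therefore coincides with the paper's approach, and the expository sketch of the frozen-coefficient/Campanato argument you give along the way is a reasonable summary of how the cited result is proved (modulo the usual subtlety that the $L^\infty$ control of $D^2v$ needed in the perturbation step is typically handled through an integral Campanato comparison or an interpolation argument rather than an a priori pointwise bound), but since you and the paper both defer to the reference, that detail does not affect the correctness of the proposal.
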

 According to problem \eqref{25}, we have the following results.
 \begin{lemma}\label{2.9}
 	$\overline{F}(\bar{\lambda})$ is a concave function, where
 	\begin{align*}
 \overline{F}(\bar{\lambda}_i)=-\sqrt{2}\sum_{i=1}^{n}\frac{1}{\bar{\lambda}_{i}}
 	\end{align*}
 	and $\bar{\lambda}\in \Sigma=\{\bar{\lambda}_{1}>0,\bar{\lambda}_{2}>0,\cdots,\bar{\lambda}_{n}>0\}.$
 \end{lemma}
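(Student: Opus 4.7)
The plan is to verify concavity by a direct Hessian computation, exploiting the fact that $\overline{F}$ is a \emph{separable} sum of single-variable functions on the cone $\Sigma$. First I would rewrite
\[
\overline{F}(\bar{\lambda})=\sum_{i=1}^{n} f(\bar{\lambda}_i), \qquad f(s)=-\frac{\sqrt{2}}{s},
\]
so that each summand depends on only one coordinate. A direct computation gives $f'(s)=\sqrt{2}/s^{2}$ and $f''(s)=-2\sqrt{2}/s^{3}$, which is strictly negative for every $s>0$. Thus $f$ is strictly concave on $(0,\infty)$.

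Next I would observe that because $\overline{F}$ is separable, its Hessian at any $\bar{\lambda}\in\Sigma$ is the diagonal matrix $\mathrm{diag}\bigl(f''(\bar{\lambda}_{1}),\ldots,f''(\bar{\lambda}_{n})\bigr)$; there are no cross terms. Since each diagonal entry is strictly negative on $\Sigma$, the Hessian is negative definite. Because $\Sigma$ is an open convex cone in $\mathbb{R}^{n}$, this suffices to conclude that $\overline{F}$ is (strictly) concave on $\Sigma$.

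There is essentially no technical obstacle in this lemma; the content is just the one-dimensional fact that $s\mapsto -1/s$ is concave on $(0,\infty)$, lifted to the separable sum. The reason the lemma is isolated is that it supplies the missing hypothesis needed to apply Lemma \ref{3.31} in the proof of Lemma \ref{2.6}: after the substitution $\bar{u}=u+\frac{1}{2}|x|^{2}$, the evolution equation for $\bar{u}$ is driven by $\overline{F}(D^{2}\bar{u})$ on the cone of positive-definite Hessians, and one needs $\overline{F}$ to be concave there in order to invoke the preservation of two-sided Hessian bounds.
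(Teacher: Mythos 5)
Your proof is correct and follows essentially the same route as the paper: both compute $f'(s)=\sqrt{2}/s^{2}$ and $f''(s)=-2\sqrt{2}/s^{3}$, observe that the Hessian of the separable sum is diagonal with these strictly negative entries on $\Sigma$, and conclude concavity. Your added remark about why the lemma is isolated (to feed Lemma \ref{3.31} via Lemma \ref{2.6}) is accurate context but not part of the paper's argument.
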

 \begin{proof}
 	By calculating, we get the first derivative of $\overline{F}(\bar{\lambda})$.
 	\begin{align*}
 	\frac{\partial \overline{F}(\bar{\lambda})}{\partial \bar{\lambda}_{i}}=
 	\sqrt{2}\frac{1}{\bar{\lambda}_i^{2}}
 	\end{align*}
 	Further, considering the Hessian matrix $\dfrac{\partial^{2} \overline{F}(\bar{\lambda})}{\partial \bar{\lambda}_{i}\partial \bar{\lambda}_{j}}$, it follows that $\dfrac{\partial^{2} \overline{F}(\bar{\lambda})}{\partial \bar{\lambda}_{i}\partial \bar{\lambda}_{j}}$=0 for $i\neq j$, if $i=j$, $\dfrac{\partial^{2}\overline{F}(\bar{\lambda})}{\partial \bar{\lambda}_{i}\partial \bar{\lambda}_{j}}=-2\sqrt{2}\dfrac{1}{\bar{\lambda}_i^{3}}$. Thus the Hessian matrix  $\dfrac{\partial^{2} \overline{F}(\bar{\lambda})}{\partial \bar{\lambda}_{i}\partial \bar{\lambda}_{j}}$ is a diagonal matrix with the diagonal elements being $-2\sqrt{2}\dfrac{1}{\bar{\lambda}_i^{3}}$ and the remaining elements being 0. Specifically, the Hessian matrix can be expressed as:
 	\begin{align*}
 	\begin{bmatrix}
 	-2\sqrt{2}\dfrac{1}{\bar{\lambda}_i^{3}} & & &\\
 	& -2\sqrt{2}\dfrac{1}{\bar{\lambda}_i^{3}} & &\\
 	& & \ddots &\\
 	& & & -2\sqrt{2}\dfrac{1}{\bar{\lambda}_i^{3}}
 	\end{bmatrix}
 	\end{align*}
 	Thus $\dfrac{\partial^{2} \overline{F}(\bar{\lambda})}{\partial \bar{\lambda}_{i}^{2}}$ is negative in a cone $\Sigma=\{\bar{\lambda}_{1}>0,\bar{\lambda}_{2}>0,\cdots,\bar{\lambda}_{n}>0\}$. Moreover $\overline{F}(\bar{\lambda})$ is a concave function.
 \end{proof}
 \begin{lemma}\label{2.10}
 $K=\{\gamma\in[0,1]:\eqref{25}\ has\ a\ solution\}$, then $K$ is closed.
 \end{lemma}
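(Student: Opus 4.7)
The plan is a standard method-of-continuity compactness argument. Let $\{\gamma_k\} \subset K$ with $\gamma_k \to \gamma_* \in [0,1]$, and denote by $u_k \in C^{5+\alpha,(5+\alpha)/2}(B_R \times (0,T)) \cap C(\overline{B}_R \times [0,T))$ the corresponding solutions of \eqref{25}. I would extract a subsequential limit $u_*$ and verify that $u_*$ solves \eqref{25} at $\gamma = \gamma_*$ in the required regularity class, so that $\gamma_* \in K$. All of this needs a priori estimates on $\{u_k\}$ that are uniform in $k$.

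The crucial first step is a uniform Hessian pinching that survives along the homotopy. Following the trick in Lemma \ref{2.6}, I would set $\bar u_k = u_k + \tfrac{1}{2}|x|^2$, under which \eqref{25} transforms to
\[
\partial_t \bar u_k = G_{\gamma_k}(D^2 \bar u_k), \qquad G_\gamma(A) := \gamma\,\overline{F}(A) + (1-\gamma)\bigl(\mathrm{tr}(A) - n\bigr),
\]
with $\overline{F}(A) = -\sqrt{2}\,\mathrm{tr}(A^{-1})$ as in Lemma \ref{2.9}. The operator $G_\gamma$ is monotone and concave on $\Gamma_+$, and its dual $G_\gamma^{\ast}(A) = -G_\gamma(A^{-1}) = \gamma\sqrt{2}\,\mathrm{tr}(A) - (1-\gamma)\,\mathrm{tr}(A^{-1}) + (1-\gamma)n$ is also concave on $\Gamma_+$, since $A \mapsto -\mathrm{tr}(A^{-1})$ is concave. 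Hence the hypotheses of Lemma \ref{2.5} and Lemma \ref{3.31} are satisfied uniformly in $\gamma \in [0,1]$, and the transformation argument of Lemma \ref{2.6} applied to $\bar u_k$ yields the uniform pinching $(-1+\zeta)\,I \le D^2 u_k \le \varrho\,I$ on $B_{R,T}$. A uniform $L^{\infty}$ bound on $u_k$ is immediate from the parabolic maximum principle and the common boundary/initial data $u_0$.

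With the Hessian pinched, each $G_{\gamma_k}$ is uniformly parabolic and concave with ellipticity constants depending only on $n,\zeta,\varrho$. Lemma \ref{2.7} then yields uniform interior $C^{\alpha,\alpha/2}$ bounds on $D^2 u_k$; differentiating \eqref{25} in $x_\ell$, the function $D_\ell u_k$ satisfies a linear parabolic equation with Hölder-continuous coefficients of uniformly bounded norm, and iterating Lemma \ref{2.8} produces uniform $C^{5+\alpha',(5+\alpha')/2}_{\mathrm{loc}}(B_{R,T})$ bounds. Arzelà–Ascoli then extracts a subsequence $u_{k_j}$ converging in $C^{5+\alpha''}_{\mathrm{loc}}$ on compact subsets of $B_{R,T}$ to a limit $u_*$ which solves \eqref{25} at $\gamma = \gamma_*$ in the interior and still obeys the pinching.

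The main obstacle is continuity of $u_*$ up to the parabolic boundary $PB_{R,T}$, since interior Hölder estimates alone do not control boundary oscillation. I would construct barrier functions of the form $u_0(x) \pm w(x,t)$, where $w \ge 0$ is smooth, vanishes on $PB_{R,T}$, and satisfies $\partial_t w - a^{ij}_k D_{ij} w > 0$ uniformly in $k$, with $a^{ij}_k$ the linearized coefficient matrix of \eqref{25} evaluated at $u_k$. Such $w$ exists because the ellipticity of $a^{ij}_k$ is controlled uniformly by the pinching obtained above, so one can use a standard distance-to-boundary barrier adjusted to account for the time variable. Comparing $u_k$ with $u_0 \pm w$ via the parabolic maximum principle produces an equicontinuous modulus for $\{u_k\}$ on $\overline{B}_R \times [0,T)$ relative to $PB_{R,T}$, so the limit $u_*$ inherits the boundary data continuously. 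This shows $\gamma_* \in K$ and closes the proof.
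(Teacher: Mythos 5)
Your proposal follows essentially the same route as the paper: transform via $\bar u = u + \tfrac{1}{2}|x|^2$, verify concavity and monotonicity of the combined operator and its dual so that Lemma \ref{3.31} gives a uniform Hessian pinching along the homotopy, upgrade to interior $C^{2+\alpha,\frac{2+\alpha}{2}}$ bounds via Lemma \ref{2.7} and Lemma \ref{2.8}, and extract a convergent subsequence by a diagonal/Arzel\`a--Ascoli argument. You additionally spell out how to secure continuity of the limit up to the parabolic boundary via a barrier comparison, a point the paper compresses into ``a diagonal sequence argument and the regularity theory of parabolic equations,'' but this is a refinement of, not a departure from, the paper's approach.
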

\begin{proof} Let $\bar{u}=u+\frac{1}{2}\vert x\vert^2$, then we have $D^2\bar{u}=D^2u+I$, note that $D^2\bar{u}>0$ for $\bar{\lambda}\in\Sigma=\{\bar{\lambda}_{1}>0,\bar{\lambda}_{2}>0,\cdots,\bar{\lambda}_{n}>0\}.$\\
Suppose that $u$ is a solution of the equation \eqref{25}
\begin{align*}
	\begin{cases}
\frac{\partial u}{\partial t}-\gamma F_{\frac{\pi}{4}}(D^{2}u)-(1-\gamma)\Delta u=0,\quad(x,t)\in B_{R,T},\\
u=u_{0}(x), \quad\qquad\qquad\qquad\qquad\quad\;\;\;\;(x,t)\in PB_{R,T}.
\end{cases}
\end{align*}
it is clear that
\begin{align}
	\begin{cases}
\frac{\partial \bar{u}}{\partial t}-\gamma \overline{F}(D^{2}\bar{u})-(1-\gamma)(\Delta \bar{u}-1)=0,\quad(x,t)\in B_{R,T},\\
\bar{u}=\bar{u}_{0}(x), \quad\qquad\qquad\qquad\qquad\quad\qquad\;\;\;\;(x,t)\in PB_{R,T}.
\end{cases}
\end{align}
 For $\bar{A}\in\Gamma_+$, set
 $$\overline{F}(\bar{A})=\gamma \overline{F}(\bar{\lambda} (\bar{A}))+(1-\gamma)\text{Tr}\bar{A}.$$
 Let $\bar{\lambda}(\bar{A})=(\bar{\lambda}_{1},\bar{\lambda}_{2},\cdots,\bar{\lambda}_{n})$ are the eigenvalues of $\bar{A}$. Define
 $$\overline{F}(\bar{A})=\gamma  \overline{F}(\bar{\lambda}(\bar{A}))+(1-\gamma)(\bar{\lambda}_{1}+\bar{\lambda}_{2}+\cdots+\bar{\lambda}_{n}),$$
 $$\overline{F}^{*}(\bar{A})=\gamma \overline{F}(\bar{\lambda}(\bar{A}))-(1-\gamma)\left(\frac{1}{\bar{\lambda}_{1}}+\frac{1}{\bar{\lambda}_{2}}+\cdots+\frac{1}{\bar{\lambda}_{n}}\right),$$
 By Lemma \ref{2.9}, $D^{2}\overline{F}(\bar{\lambda})$ is negative in a cone $\Sigma=\{\bar{\lambda}_{1}>0,\bar{\lambda}_{2}>0,\cdots,\bar{\lambda}_{n}>0\}$, thus $\overline{F}$, $\overline{F}^{*}$ are smooth concave functions defined on the cone $\Gamma_+$, which are monotone increasing.\\
	\indent It follows from Lemma \ref{3.31} that if $\bar{u}_{0}(x)$ satisfies $\mu I\le D^2\bar{u}_0\le\Lambda I$, then $\bar{u}(x,t)$ does so. For $T>s>0$, $R>\epsilon>0$, define
 $$B_{R-\epsilon,T}=B_{R-\epsilon}\times(0,T),\quad B_{R-\epsilon}(T,s)=B_{R-\epsilon}\times(s,T).$$
 Furthermore, combining Lemma \ref{2.7} and Lemma \ref{2.8}, we have
 \begin{align}\label{26}
 	||\bar{u}||_{C^{2,1}(\overline{B}_{R-\epsilon,T})}\le C_{1},\quad||\bar{u}||_{C^{2+\alpha,\frac{2+\alpha}{2}}(\overline{B}_{R-\epsilon}(T,s))}\le C_{2},
 \end{align}
 where $0<\alpha<1$, $C_{1}$ is a positive constant depending only on $u_{0}$, $R$, $T$ and $C_{2}$ relies on $u_{0},\mu,\Lambda,R,T,\frac{1}{\epsilon},\frac{1}{s}$.\\
 And so we obtain
 \begin{align}\label{99}
 	||u||_{C^{2,1}(\overline{B}_{R-\epsilon,T})}\le C_{3},\quad||u||_{C^{2+\alpha,\frac{2+\alpha}{2}}(\overline{B}_{R-\epsilon}(T,s))}\le C_{4}
 \end{align}
  By \eqref{99}, a diagonal sequence argument and the regularity theory of parabolic equations imply that $K$ is closed.
 \end{proof}
\indent To prove that $K$ is open we need the following lemma (cf. Theorem 17.6 of  \cite{gilbarg1977elliptic})
 \begin{lemma}\label{2.11}
 	Let $\mathcal{B}_{1}$, $\mathcal{B}_{2}$ and $\textbf{X}$ be Banach spaces and $G$ be a mapping from an open subset of $\mathcal{B}_{1}\times\textbf{X}$ into $\mathcal{B}_{2}$. Let $(\tilde{u},\tilde{\gamma})$ be a point in $\mathcal{B}_{1}\times\textbf{X}$ satisfying that\\
 	\rm{(1)}$\;$ $G(\tilde{u},\tilde{\gamma})$=0,\\
 	\rm{(2)}$\;$ $G$ in continuously differentiable at $(\tilde{u},\tilde{\gamma})$,\\
 	\rm{(3)}$\;$ the partial Fr$\rm{\acute{e}}$chet derivative $L=G^1_{(\tilde{u},\tilde{\gamma})}$ is invertible.\\
 	Then there exists a neighbourhood $\mathcal{N}$ of $\tilde{\gamma}$ in $\textbf{X}$ such that the eqaution $G(u,\gamma)=0$ is solvable for each $\gamma\in\mathcal{N}$ with $u=u_{\gamma}\in\mathcal{B}_1$.
 \end{lemma}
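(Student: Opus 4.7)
The statement is the classical implicit function theorem in Banach spaces, so the proof should follow the standard fixed-point argument. The plan is to reduce the equation $G(u,\gamma)=0$ near $(\tilde u,\tilde\gamma)$ to a fixed-point problem for which the contraction mapping principle applies, using invertibility of $L$ to trade the full equation for an equation that is a small perturbation of the identity.

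Concretely, first I would introduce the auxiliary mapping
\[
T_\gamma(u) \;=\; u \;-\; L^{-1}\, G(u,\gamma),
\]
which is well defined because $L\in\mathrm{Iso}(\mathcal B_1,\mathcal B_2)$. By construction $T_\gamma(u)=u$ if and only if $G(u,\gamma)=0$, so solving the equation is reduced to producing a fixed point of $T_\gamma$ in $\mathcal B_1$. Note the normalisation $T_{\tilde\gamma}(\tilde u)=\tilde u$ since $G(\tilde u,\tilde\gamma)=0$.

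Next I would use the $C^1$ hypothesis to make $T_\gamma$ a contraction. The partial derivative is $D_u T_\gamma(u)=I-L^{-1}D_u G(u,\gamma)$, which at $(\tilde u,\tilde\gamma)$ equals $I-L^{-1}L=0$. Continuity of $D_u G$ therefore yields $r_0>0$ and a neighbourhood $\mathcal N_0$ of $\tilde\gamma$ such that $\|D_u T_\gamma(u)\|_{\mathcal B_1\to\mathcal B_1}\le \tfrac12$ whenever $\|u-\tilde u\|_{\mathcal B_1}\le r_0$ and $\gamma\in\mathcal N_0$. The mean value inequality in Banach spaces then gives
\[
\|T_\gamma(u_1)-T_\gamma(u_2)\|_{\mathcal B_1}\;\le\;\tfrac12\|u_1-u_2\|_{\mathcal B_1}
\]
on the closed ball $\overline{B_{r_0}(\tilde u)}$.

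Then I would arrange stability of this ball under $T_\gamma$ by shrinking the neighbourhood of $\tilde\gamma$. Writing
\[
T_\gamma(u)-\tilde u \;=\; \bigl(T_\gamma(u)-T_\gamma(\tilde u)\bigr)\;+\;\bigl(T_\gamma(\tilde u)-\tilde u\bigr)
\;=\;\bigl(T_\gamma(u)-T_\gamma(\tilde u)\bigr)\;-\;L^{-1} G(\tilde u,\gamma),
\]
the contraction bound controls the first piece by $r_0/2$, while the continuity of $G(\tilde u,\cdot)$ at $\tilde\gamma$ (with $G(\tilde u,\tilde\gamma)=0$) allows one to choose $\mathcal N\subset\mathcal N_0$ so small that $\|L^{-1} G(\tilde u,\gamma)\|_{\mathcal B_1}\le r_0/2$ for every $\gamma\in\mathcal N$. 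Thus $T_\gamma(\overline{B_{r_0}(\tilde u)})\subset \overline{B_{r_0}(\tilde u)}$ for $\gamma\in\mathcal N$, and the Banach fixed point theorem produces a unique $u_\gamma\in\overline{B_{r_0}(\tilde u)}$ with $T_\gamma(u_\gamma)=u_\gamma$, i.e. $G(u_\gamma,\gamma)=0$.

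The bulk of the work is book-keeping; the one place that genuinely uses the hypotheses is the step that turns $D_u T_{\tilde\gamma}(\tilde u)=0$ into a uniform bound $\|D_u T_\gamma(u)\|\le \tfrac12$, which requires continuity of the full Fréchet derivative (not just partial derivative in $u$) at $(\tilde u,\tilde\gamma)$. Without that continuity one cannot simultaneously localise in $u$ and in $\gamma$, so this is the step I would expect to be the main (and essentially only) obstacle, and the one where hypothesis (2) is indispensable.
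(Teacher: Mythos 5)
Your proof is correct: it is the standard contraction-mapping argument for the implicit function theorem in Banach spaces, reducing $G(u,\gamma)=0$ to a fixed-point problem for $T_\gamma(u)=u-L^{-1}G(u,\gamma)$, using continuity of $D_uG$ to get the contraction bound and continuity of $G(\tilde u,\cdot)$ to keep the ball stable. The paper does not prove this lemma at all; it simply cites Theorem 17.6 of Gilbarg--Trudinger, whose proof is essentially the one you gave.

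One small point of precision: you say the step requires ``continuity of the full Fr\'echet derivative (not just partial derivative in $u$).'' What the localisation in $(u,\gamma)$ actually needs is joint continuity of the partial derivative $D_uG$ as a function on $\mathcal B_1\times\mathbf X$, which is exactly what hypothesis (2) supplies; continuity of $D_\gamma G$ is not used in the existence argument (it would enter only if one also wanted differentiability of $\gamma\mapsto u_\gamma$). This does not affect the validity of your proof.
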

 By the implicit function theorem, we have the next lemma.
 \begin{lemma}\label{2.12}
$ K=\{\gamma\in[0,1]:\eqref{25}\ has\ a\ solution\}$, $K$ is open.
 \end{lemma}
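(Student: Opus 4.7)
The plan is to apply the implicit function theorem, Lemma \ref{2.11}, at an arbitrary $\tilde{\gamma}\in K$ with $\tilde{u}$ its associated solution of \eqref{25}. First I would fix a smooth extension $\phi \in C^{2+\alpha,\frac{2+\alpha}{2}}(\overline{B}_R\times[0,T])$ of $u_0$ and set up the Banach spaces
\begin{align*}
\mathcal{B}_{1}&=\{w\in C^{2+\alpha,\frac{2+\alpha}{2}}(\overline{B}_{R}\times[0,T]): w=0\ \text{on}\ PB_{R,T}\},\\
\mathcal{B}_{2}&=C^{\alpha,\frac{\alpha}{2}}(\overline{B}_{R}\times[0,T]),\qquad \mathbf{X}=\mathbb{R},
\end{align*}
writing $u=\phi+w$. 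Define
\begin{align*}
G(w,\gamma)=\partial_{t}(\phi+w)-\gamma F_{\frac{\pi}{4}}(D^{2}(\phi+w))-(1-\gamma)\Delta(\phi+w).
\end{align*}
Condition (1) of Lemma \ref{2.11} is immediate at $(\tilde{w},\tilde{\gamma})$ with $\tilde{w}=\tilde{u}-\phi$. Condition (2) follows because by Lemma \ref{2.6} we have $D^{2}\tilde{u}\ge(-1+\zeta)I$, so $D^{2}(\phi+w)$ stays inside the open set $\{\lambda_i>-1\}$ for $w$ near $\tilde{w}$, and $F_{\frac{\pi}{4}}(\lambda)=-\sqrt{2}\sum_{i}\frac{1}{1+\lambda_{i}}$ is smooth there.

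The heart of the argument is verifying condition (3), i.e.\ invertibility of the partial Fr\'echet derivative
\begin{align*}
L(v)=\partial_{t}v-\gamma\,\frac{\partial F_{\frac{\pi}{4}}}{\partial u_{ij}}(D^{2}\tilde{u})\,v_{ij}-(1-\gamma)\Delta v,\qquad v\in\mathcal{B}_1.
\end{align*}
Since $\partial F_{\frac{\pi}{4}}/\partial\lambda_{i}=\sqrt{2}/(1+\lambda_{i})^{2}$ and by Lemma \ref{2.6} the eigenvalues of $D^{2}\tilde{u}$ lie in $[-1+\zeta,\varrho]$, the coefficient matrix
\begin{align*}
a^{ij}=\gamma\,\frac{\partial F_{\frac{\pi}{4}}}{\partial u_{ij}}(D^{2}\tilde{u})+(1-\gamma)\delta^{ij}
\end{align*}
satisfies $\mu' I\le (a^{ij})\le\Lambda' I$ uniformly in $\gamma\in[0,1]$, with $\mu',\Lambda'$ depending only on $n,\zeta,\varrho$. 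The coefficients also lie in $C^{\alpha,\frac{\alpha}{2}}$ because $\tilde{u}\in C^{2+\alpha,\frac{2+\alpha}{2}}$ (from the estimates \eqref{26}--\eqref{99} produced inside the proof of Lemma \ref{2.10}). Classical Schauder theory for linear uniformly parabolic equations with H\"older continuous coefficients and homogeneous parabolic Dirichlet data then yields that $L:\mathcal{B}_{1}\to\mathcal{B}_{2}$ is a bounded linear bijection, hence an isomorphism.

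Once these three conditions are verified, Lemma \ref{2.11} provides a neighbourhood $\mathcal{N}$ of $\tilde{\gamma}$ in $[0,1]$ such that $G(w,\gamma)=0$ is solvable in $\mathcal{B}_{1}$ for each $\gamma\in\mathcal{N}$; setting $u_{\gamma}=\phi+w_{\gamma}$ produces a solution of \eqref{25} for every $\gamma\in\mathcal{N}$, proving $\mathcal{N}\subset K$ and thus openness of $K$. The main obstacle is the invertibility statement for $L$: the uniform parabolicity is delicate because it has to hold uniformly in the parameter $\gamma\in[0,1]$, but the explicit lower bound $\partial F_{\frac{\pi}{4}}/\partial\lambda_i\ge\sqrt{2}/(1+\varrho)^{2}>0$ combined with the convex combination structure with $\Delta$ ensures the ellipticity constants are bounded away from $0$ and $\infty$ independently of $\gamma$; everything else is routine linear parabolic Schauder solvability.
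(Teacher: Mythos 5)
Your proposal is correct and follows essentially the same strategy as the paper: apply Lemma~\ref{2.11} at an arbitrary $\tilde{\gamma}\in K$ and reduce openness to the invertibility of the linearized operator, which is a linear uniformly parabolic Cauchy--Dirichlet problem handled by Schauder theory. The two setups differ only cosmetically: the paper encodes the boundary condition as a second component of $G$ (so $\mathcal{B}_2$ includes $C^{2+\alpha,\frac{2+\alpha}{2}}(PB_{R,T})$), whereas you subtract an extension $\phi$ of $u_0$ and build homogeneous boundary data into $\mathcal{B}_1$; and the paper works with the shifted variable $\bar{u}=u+\tfrac12|x|^2$ and the Monge--Amp\`ere--type operator $\overline{F}$, whereas you stay with $F_{\frac{\pi}{4}}$ directly on the cone $\{\lambda_i>-1\}$. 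Your version is a bit more explicit than the paper's one-line appeal to linear parabolic theory, since you actually compute $\partial F_{\frac{\pi}{4}}/\partial\lambda_i=\sqrt{2}/(1+\lambda_i)^2$ and bound the ellipticity constants uniformly in $\gamma\in[0,1]$. One small citation slip: Lemma~\ref{2.6} is stated only for the pure equation $\partial_t u=F_{\frac{\pi}{4}}(D^2u)$ ($\gamma=1$), not for the interpolated family $\gamma F_{\frac{\pi}{4}}+(1-\gamma)\Delta$; the Hessian bound $(-1+\zeta)I\le D^2\tilde{u}\le\varrho I$ for arbitrary $\gamma$ actually follows from Lemma~\ref{3.31} applied to the concave, monotone operator $\gamma\overline{F}+(1-\gamma)\mathrm{Tr}$ as carried out in the proof of Lemma~\ref{2.10}, so you should cite that step rather than Lemma~\ref{2.6}. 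With that correction, the argument is complete.
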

\begin{proof} Define the Banach spaces
 $$\textbf{X}=\mathbb{R},$$
 $$\mathcal{B}_{1}=C^{2+\alpha,\frac{2+\alpha}{2}}(\overline{B}_{R,T}),$$
 $$\mathcal{B}_{2}=C^{\alpha,\frac{\alpha}{2}}(\overline{B}_{R,T})\times C^{2+\alpha,\frac{2+\alpha}{2}}(PB_{R,T})$$
 and a differentiable map from $\mathcal{B}_{1}\times\textbf{X}$ into $\mathcal{B}_{2}$,
 $$G:(u,\gamma)\to \left[\frac{\partial u}{\partial t}-(\gamma F_{\frac{\pi}{4}}(D^2u)+(1-\gamma)\Delta u),u-u_0\right].$$
 Similarly, suppose that $\bar{u}=u+\frac{1}{2}\vert x\vert^2$, then we have
 \begin{align*}
 \overline{G}:(\bar{u},\gamma)\to \left[\frac{\partial \bar{u}}{\partial t}-(\gamma \overline{F}(D^2\bar{u})+(1-\gamma)(\Delta \bar{u}-1)),\bar{u}-\bar{u}_0\right]
 \end{align*}
 We take an open set of $\mathcal{B}_{1}\times\textbf{X}$:
 $$\Theta=\left\{\bar{u}\bigg|\frac{\mu}{2}I<D^2\bar{u}(x,t)<\frac{3\Lambda}{2}I,\bar{u}\in\mathcal{B}_{1}\right\}\times(0,1).$$

 Suppose that $(\bar{u},\gamma)\in \Theta.$ Then the partial Fr$\rm{\acute{e}}$chet derivative $L=\overline{G}^1_{(\bar{u},\gamma)}$ is invertible if and only if the following Cauchy-Dirichlet problem is solvale:
 $$	
 \begin{cases}
 	\frac{\partial w}{\partial t}-\gamma \bar{g}^{ij}\frac{\partial^2w}{\partial x^i\partial x^j}-(1-\gamma)\Delta w=h, \quad(x,t)\in B_{R,T},\\
 	w=g, \quad\qquad\qquad\qquad\qquad\qquad\qquad(x,t)\in PB_{R,T}.
 \end{cases}
 $$
 where $(h,g)\in\mathcal{B}_{2}$ and $\bar{g}^{ij}=\dfrac{\partial \overline{F}(D^2\bar{u})}{\partial\bar{u}_{ij} }$. Using the theory of the linear parabolic equations (cf. \cite{lieberman1996second}) we can do it.\\
	\indent Therefore applying Lemma \ref{2.11} and using approximate methods, we get $\overline{G}_{(\bar{u},\gamma)}=0$ is solve. Further, we can solve $G_{(\bar{u},\gamma)}=0$. Thus we deduce that $K$ is open.
	\end{proof}
 \begin{lemma}\label{2.13}
 	Let $u_0:\mathbb{R}^n\to\mathbb{R}$ be a $C^2$ function which satisfies condition $B$. Then there exists a unique solution of \eqref{121} such that $u(x,t)$ satisfies condition $B$ and \eqref{13}
 \end{lemma}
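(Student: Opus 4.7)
The plan is a continuity method on bounded balls, uniform interior parabolic regularity, a diagonal extraction as $R \to \infty$, and a linearized maximum principle for uniqueness.

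Since Lemmas \ref{2.10} and \ref{2.12} show that $K$ is simultaneously closed and open in $[0,1]$ and contains $\gamma = 0$ (the heat equation), connectedness forces $K = [0,1]$. Hence for every $R, T > 0$ the problem \eqref{25} at $\gamma = 1$ admits a classical solution $u^{R,T}$ on $\overline{B}_{R,T}$ satisfying \eqref{121} with $u^{R,T} = u_0$ on $PB_{R,T}$ (compatibility at the corner $\partial B_R \times \{0\}$ being arranged by a standard mollification of the lateral data that reconverges to $u_0$). By Lemma \ref{2.6}, the pinch $(-1+\zeta)I \le D^2 u^{R,T} \le \varrho I$ propagates from $PB_{R,T}$ into $B_{R,T}$, making the equation uniformly parabolic with ellipticity constants depending only on $n, \zeta, \varrho$; the concavity required to apply the interior estimate is supplied, after the shift $\bar u = u + \tfrac{1}{2}|x|^2$, by Lemma \ref{2.9}.

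With uniform ellipticity in hand, Lemma \ref{2.7} yields a uniform $C^{2+\alpha,(2+\alpha)/2}$ interior bound on every subcylinder contained in $\mathbb{R}^n \times (\varepsilon_0, \infty)$. Differentiating \eqref{121} and iterating Lemma \ref{2.8} on the linearized equation satisfied by each $\partial_k u^{R,T}$ bootstraps this to uniform $C^{k+\alpha,(k+\alpha)/2}$ bounds for all $k \ge 2$ on compact subsets of $\mathbb{R}^n \times (0,\infty)$. Since $|\partial_t u^{R,T}| = |F_{\pi/4}(D^2 u^{R,T})| \le \sqrt{2}\, n/\zeta$ (using $1 + \lambda_i \ge \zeta$ from Condition $B$), we also obtain the uniform estimate $|u^{R,T}(x,t) - u_0(x)| \le \sqrt{2}\, n t/\zeta$. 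A diagonal extraction as $R, T \to \infty$, after a single base-point normalization at the origin, then produces a smooth limit $u \in C^\infty(\mathbb{R}^n \times (0,\infty)) \cap C(\mathbb{R}^n \times [0,\infty))$ solving \eqref{121}, with Condition $B$ persisting in the limit. For uniqueness, if $u_1, u_2$ both solve \eqref{121} and satisfy Condition $B$ with the same initial data, then $w = u_1 - u_2$ obeys a linear uniformly parabolic equation $\partial_t w = a^{ij}(x,t) w_{ij}$ (with $a^{ij}$ obtained by integrating $\partial F_{\pi/4}/\partial u_{ij}$ along the segment between $D^2 u_2$ and $D^2 u_1$), has zero initial trace, and exhibits at most quadratic growth in $x$; a standard Gaussian-barrier maximum principle forces $w \equiv 0$.

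The main technical obstacle is the diagonal passage itself: Condition $B$ controls only $D^2 u^{R,T}$, not the function or its gradient globally, so the sequence $u^{R,T}$ is not \emph{a priori} locally precompact in $C^0$. The rescue is that the pin $u^{R,T}(0,0) = u_0(0)$ together with the pointwise equation bound $|\partial_t u^{R,T}| \le \sqrt{2}\, n/\zeta$ fixes $u^{R,T}(0,t)$ uniformly in $R, T$; combined with the Hessian bound, this yields $C^1_{\mathrm{loc}}$ precompactness via the single base-point normalization, and the interior Schauder estimates from the previous paragraph then upgrade the convergence to $C^\infty_{\mathrm{loc}}$.
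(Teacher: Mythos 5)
Your proposal follows essentially the same route as the paper: continuity method on bounded cylinders via Lemmas \ref{2.10} and \ref{2.12}, propagation of the Hessian pinch by Lemma \ref{2.6}, interior Krylov--Safonov and Schauder estimates through Lemmas \ref{2.7} and \ref{2.8}, and a diagonal extraction as the approximating domains exhaust $\mathbb{R}^{n}\times(0,\infty)$.

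There is one place where you are more careful than the paper and it is worth noting. The paper asserts a uniform bound $|\bar{u}_{N}|_{C^{0}}+|D\bar{u}_{N}|_{C^{0}}+|D^{2}\bar{u}_{N}|_{C^{0}}\le\widetilde{C}$ on a parabolic cylinder based at $t=\kappa>0$, ``combining $\mu I\le D^{2}(\bar{u}_{N})_{0}\le\Lambda I$,'' but does not explain how the zeroth-order bound at time $\kappa$ is obtained from data given only at $t=0$; Lemma \ref{2.8} needs such a $C^{0}$ bound as input, so this is circular as written. Your fix is exactly right: since $-1+\zeta\le\lambda_{i}\le\varrho$ gives $\zeta\le 1+\lambda_{i}$, one has $|F_{\frac{\pi}{4}}(D^{2}u^{R,T})|\le\sqrt{2}\,n/\zeta$, hence the pointwise integral bound $|u^{R,T}(x,t)-u_{0}(x)|\le\sqrt{2}\,n t/\zeta$, which together with local boundedness of $u_{0}$ and the Hessian pinch yields the needed local $C^{1}$ precompactness (interpolation between $C^{0}$ and $C^{2}$). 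You also explicitly address uniqueness via the linearized equation and a Gaussian barrier adapted to the quadratic growth permitted by condition $B$; the paper states uniqueness in Lemma \ref{2.13} but does not prove it there. Both additions are correct, and the $C^{0}$ control is a genuine improvement in rigor over the paper's argument. Two cosmetic points: the base-point normalization at the origin you invoke is redundant once you have the $|u^{R,T}-u_{0}|$ bound, since the approximants are already pinned by their boundary data; and the mollification you mention for corner compatibility is standard but should be paired with a remark that the resulting solutions still converge to $u_{0}$ in $C^{0}$ as $t\to0^{+}$, which again follows from the same time-derivative bound.
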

 \begin{proof} For $N\in \textbf{Z}^+$, $T>0$, consider the Cauchy-Dirichlet problem
 \begin{align}\label{27}
 	\begin{cases}
 		\frac{\partial u}{\partial t}-F_{\frac{\pi}{4}}(D^{2}u)=0,\quad(x,t)\in B_{N,T},\\
 		u=u_{0}, \qquad\qquad\qquad(x,t)\in PB_{N,T}.
 	\end{cases}
 \end{align}
 By Lemma \ref{2.10} and \ref{2.12}, there exists a unique solution of \eqref{27}. We denote it by $u_{N}(x,t)$. Lemma \ref{2.6} tells us that $u_{N}(x,t)$ satisfies condition $B$. Suppose that $\bar{u}_N=u_N+\frac{1}{2}\vert x\vert^2$, for $Q_{R,x_{0}}\subset\subset B_{N,T}$, by Lemma \ref{2.7} and \ref{2.8}, there exists a positive constant $C$ independent of $N$ such that
 $$[D^{2}\bar{u}_{N}]_{C^{\alpha,\frac{\alpha}{2}}\left(\overline{Q}_{\frac{R}{3},x_{0}}\right)}\le C.$$
 Note that
 \begin{align*}
 [D^{2}u_{N}]_{C^{\alpha,\frac{\alpha}{2}}\left(\overline{Q}_{\frac{R}{3},x_{0}}\right)}\le C_5.
 \end{align*}
 Combining $\mu I\le D^2(\bar{u}_{N})_0\le\Lambda I$, there exists a positive constant $\widetilde{C}$ independent of $N$ and $\frac{1}{\kappa}$ such that
 $$|\bar{u}_{N}|_{C^{0}\left(\overline{Q}_{\frac{R}{3},x_{0}}\right)}+|D\bar{u}_{N}|_{C^{0}\left(\overline{Q}_{\frac{R}{3},x_{0}}\right)}+|D^{2}\bar{u}_{N}|_{C^{0}\left(\overline{Q}_{\frac{R}{3},x_{0}}\right)}\le \widetilde{C}.$$
 Therefore,
 \begin{align*}
 |u_{N}|_{C^{0}\left(\overline{Q}_{\frac{R}{3},x_{0}}\right)}+|Du_{N}|_{C^{0}\left(\overline{Q}_{\frac{R}{3},x_{0}}\right)}+|D^{2}u_{N}|_{C^{0}\left(\overline{Q}_{\frac{R}{3},x_{0}}\right)}\le \widetilde{C}_1.
 \end{align*}
 a diagonal sequence argument and the regularity theory of parabolic equations imply that we obtain the desired results.
 \end{proof}
 \begin{proof}[Proof of Theorem \ref{1.2}]Using Lemma \ref{2.13}, there exists a unique solution of \eqref{121} satisfying \eqref{13} and condition $B$.
 By Lemma \ref{2.7} we get
 \begin{align}\label{3811}
 	[D^{2}u]_{C^{\alpha,\frac{\alpha}{2}\left(\overline{Q}_{\frac{1}{2},x_{0}}\right)}}\le C,
 \end{align}
where $C$ is a positive constant depending on $n,\mu,\Lambda$ and $\frac{1}{\kappa}$.

We will derive higher order estimate \eqref{100005} via the blow up argument for $l=3$. To do so, By \cite{andrews2007pinching}, we employ a parabolic scaling now. The remaining proof is routine. Define
 $$y=\sigma(x-x_{0}),\quad s=\sigma(t-t_{0}),$$
 $$u_{\sigma}(y,s)=\sigma^{2}\left[(x,t)-u(x_{0},t_{0})-Du(x_{0},t_{0})\cdot(x-x_{0})\right].$$
 It is easy to see that
 $$D^{2}_{y}u_{\sigma}=D^{2}_{x}u,\quad\frac{\partial}{\partial s}u_{\sigma}=\frac{\partial}{\partial t}u$$
 and
 $$D_{y}^{l}u_{\sigma}=\sigma^{2-l}D^{l}_{x}u$$
 for all nonnegative integers $l$. By computing, $u_{\sigma}(y,s)$ satisfies
 	\begin{align*}
 		\begin{cases}
 			\frac{\partial u_{\sigma}}{\partial s}=F_{\frac{\pi}{4}}(D^{2}u_{\sigma}), \qquad s>0,y\in \mathbb{R}^{n},\\
 			\;\;u_{\sigma}=u_{\sigma}(y,s)|_{t=t_{0}}, \quad s=0,y\in \mathbb{R}^{n}.
 	\end{cases}
 	\end{align*}
 	with
 	\begin{align}\label{29}
 			u_{\sigma}(0,0)=Du_{\sigma}(0,0)=0.
 	\end{align}
 Suppose that $|D^{3}u|^{2}$ is not bounded in $\mathbb{R}^{n}\times\left[\epsilon_{0},+\infty\right)$. By Lemma 3.5 of \cite{huang2011blow}, there would be a sequence $\{t_{k}\}$($t_{k}\ge\epsilon_{0}$) and $R_{k}\to +\infty$, such that
 \begin{align}\label{210}
 	2\rho_{k}:={\sup \limits_{x\in B_{R_{k},x_{0}}}}|D^{3}u(x,t_{k})|^{2}\to+\infty
 \end{align}
 and
 \begin{align}\label{211}
 \sup_{\substack{x\in B_{R_{k}},x_{0}\\t\le t_{k}}}|D^{3}u(x,t)|^{2}\le 2\rho_{k}.
 \end{align}
 Then there exists $x_{k}$ such that
 \begin{align}\label{212}
 	 |D^{3}u(x_{k},t_{k})|^{2} \ge \rho_{k}\to\infty,\quad k\to+\infty.
 \end{align}
Let $(y,Du_{\sigma_{k}}(y,s))$ be a parabolic scaling of $(x,Du(x,t))$ by $\sigma_{k}=(\rho_{k})^{\frac{1}{2}}$ at $(x_{k},t_{k})$ for each $k$.\\
Thus $u_{\sigma_{k}}(y,s)$ is a solution of a fully nonlinear parabolic equation
\begin{align}\label{213}
	\frac{\partial u_{\sigma_{k}}}{\partial s}-F_{\frac{\pi}{4}}(D^{2}u_{\sigma_{k}})=0,\quad0<s\le\sigma^{2}_{k}t_{k},\ y\in\mathbb{R}^{n}.
\end{align}
Combining \eqref{210}-\eqref{212} and \eqref{3811}, we arrive at
\begin{align}\label{214}
	(-1+\zeta)I\le D^{2}_{y}u_{\sigma_{k}}=D^{2}_{x}u\le\varrho I,\quad(y,s)\in\mathbb{R}^{n}\times[0,+\infty);
\end{align}
for all $y_{1},y_{2}\in \mathbb{R}^{n}$, $y_{1}=u_{\sigma_{k}}(x_{1}-x_{0})$, $y_{2}=u_{\sigma_{k}}(x_{2}-x_{0})$,
$$\frac{|D^{2}_{y_{1}}u_{\sigma_{k}}-D^{2}_{y_{2}}u_{\sigma_{k}}|}{|y_{1}-y_{2}|^{\alpha}}=\sigma_{k}^{-\alpha}\frac{|D^{2}_{x_{1}}u-D^{2}_{x_{2}}u|}{|x_{1}-x_{2}|^{\alpha}}\le\sigma_{k}^{-\alpha}C\to0,$$
and
\begin{align}\label{215}
	\begin{aligned}
			&|D^{3}_{y}u_{\sigma_{k}}|^{2}=\sigma_{k}^{-2}|D^{3}_{x}u|^{2}\le2,\quad\forall y\in\mathbb{R}^{n},\\
		&|D^{3}_{y}u_{\sigma_{k}}(0,0)|\ge1.
	\end{aligned}
\end{align}
For each $i$, set $w=D_{x^{i}} u_{\sigma_k}$. From \eqref{213}, $w$ satisfies
$$\frac{\partial w}{\partial s}-g^{ij}_{\sigma_k}w_{ij}=0.$$
Using \eqref{215} and Lemma \ref{2.7}, there exists a constant $C$ depending only on $n,\mu,\Lambda,\frac{1}{\kappa_{0}}$, such that we derive
\begin{align}\label{216}
	[D^{3}_{y}u_{\sigma_{k}}]_{C^{\alpha,\frac{\alpha}{2}\left(\overline{Q}_{\frac{1}{2},y_{0}}\right)}}\le C,\quad \forall y\in\mathbb{R}^{n}.
\end{align}
Combining \eqref{29} and \eqref{214}-\eqref{216} together, a diagonal sequence argument shows that $u_{\sigma_{k}}$ converges subsequentially and uniformly on any compact subset in $\mathbb{R}^{n}\times[0,+\infty)$ to a smooth function $u_{\infty}$ with
$$[D^{2}_{y}u_{\infty}]_{C^{\alpha,\frac{\alpha}{2}(\overline{Q}_{\frac{1}{2},y_{0}})}}=0,\quad\forall (y,s)\in\mathbb{R}^{n}\times[0,+\infty)$$
and
$$|D^{3}_{y}u_{\infty}(0,0)|\ge1.$$
It is a contradiction. So ${\sup \limits_{x\in \mathbb{R}^{n},t\ge\epsilon_{0}}}|D^{3}u(x,t)|\le C$. From equation \eqref{11}, using the interior Schauder estimates, we obtain \eqref{100005} for $l=3,4,5,\cdots$.
\end{proof}
The following lemma shows that how the self-expanding solutions are constructed by the flow \eqref{121}.
\begin{lemma}\label{2.14}
	If $u_{0}$ satisfies condition $A$ and $B$. Then $u(x,1)$ is a smooth solution to \eqref{15}.
\end{lemma}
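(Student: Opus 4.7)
The plan is to show that the solution $u(x,t)$ produced by Theorem \ref{1.2} is itself a self-expanding solution in the sense of \eqref{23}, and then invoke the proposition just before to conclude that $u(x,1)$ satisfies \eqref{15}. Smoothness of $u(x,1)$ is immediate from \eqref{13}.

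The central mechanism is the parabolic rescaling invariance of \eqref{121} combined with uniqueness. For any $\lambda>0$, define
\begin{equation*}
v(x,t)=\frac{1}{\lambda^{2}}\,u(\lambda x,\lambda^{2}t).
\end{equation*}
A direct computation shows $D^{2}_{x}v(x,t)=D^{2}u(\lambda x,\lambda^{2}t)$ and $\partial_{t}v(x,t)=\partial_{t}u(\lambda x,\lambda^{2}t)$, so $v$ solves $\partial_{t}v=F_{\pi/4}(D^{2}v)$ since $u$ does. Moreover, condition $A$ on $u_{0}$ gives
\begin{equation*}
v(x,0)=\lambda^{-2}u_{0}(\lambda x)=u_{0}(x),
\end{equation*}
and condition $B$ is preserved: $(-1+\zeta)I\le D^{2}v(x,t)=D^{2}u(\lambda x,\lambda^{2}t)\le \varrho I$. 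Thus $v$ is an admissible solution to the Cauchy problem \eqref{121} with the same initial data as $u$.

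By the uniqueness part of Theorem \ref{1.2}, we conclude $v(x,t)=u(x,t)$ for every $\lambda>0$, i.e.,
\begin{equation*}
u(\lambda x,\lambda^{2}t)=\lambda^{2}u(x,t),\qquad \lambda>0,\ t>0,\ x\in\mathbb{R}^{n}.
\end{equation*}
Setting $\lambda=1/\sqrt{t}$ yields $u(x,t)=t\,u(x/\sqrt{t},1)$, which is precisely the self-expanding relation \eqref{23}. Applying the proposition preceding this lemma (the one asserting that a self-expanding solution to \eqref{11} satisfies \eqref{15} at $t=1$), we get that $u(x,1)$ solves \eqref{15}, and the smoothness asserted in \eqref{13} on $\mathbb{R}^{n}\times(0,+\infty)$ ensures $u(\cdot,1)\in C^{\infty}(\mathbb{R}^{n})$.

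I expect no serious obstacle here: the uniqueness in Theorem \ref{1.2} is already established, and the invariance $v\mapsto u$ hinges only on the structural facts that $F_{\pi/4}$ acts only on $D^{2}u$ and that condition $A$ is a 2-homogeneity relation matched exactly to the parabolic scaling $(x,t)\mapsto(\lambda x,\lambda^{2}t)$. The only mildly delicate point is to make sure the rescaled function $v$ lies in the same solution class used in the uniqueness statement, which is why we verify that condition $B$ is preserved under the rescaling; this is automatic because the rescaling leaves $D^{2}u$ pointwise unchanged up to a shift in the evaluation point.
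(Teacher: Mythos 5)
Your proposal is correct and follows essentially the same route as the paper: define the parabolic rescaling $u_R(x,t)=R^{-2}u(Rx,R^2t)$, use condition $A$ to match the initial data, invoke the uniqueness from Theorem \ref{1.2} to conclude $u_R=u$ for all $R>0$, and thereby obtain the self-expanding relation \eqref{23}, which yields \eqref{15} at $t=1$. Your explicit check that condition $B$ is preserved under the rescaling is a slightly more careful verification of the uniqueness-class membership than the paper spells out, but the substance is identical.
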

\begin{proof} The main idea comes from \cite{chau2009entire}, which we present here for completeness.
	If $u_{0}$ satisfies condition $A$ and $B$. Then by Theorem \ref{1.2}, there exists a unique smooth solution $u(x,t)$ to \eqref{121} for all $t>0$ with initial value $u_{0}$. One can verify that
	$$u_{R}(x,t):=R^{-2}u(Rx,R^{2}t)$$
	is a solution to \eqref{121} with initial value
	$$u_{R}(x,0):=R^{-2}u_{0}(Rx)=u_{0}(x).$$
	Here condition $A$ is used. Since $u_{R}(x,0)=u_{0}$, the uniqueness results in Theorem \ref{1.2} imply
	$$u(x,t)=u_{R}(x,t)$$
	for any $R>0$. Therefore $u(x,t)$ satisfies \eqref{23}, and hence $u(x,1)$ solves \eqref{15}. In other words, $u(x,1)$ is a smooth self-expanding solution.
	\end{proof}
	We present here the proof of Theorem \ref{1.3} by the methods of \cite{chau2009entire}.
\begin{proof}[Proof of Theorem \ref{1.3}]
 Assume that
 \begin{align*}
 	U_0(x)=\lim_{R\to+\infty}R^{-2}u_0(Rx).
 \end{align*}
	Thus $U(x,0)$ satisfies condition $B$ and we obtain
	$$U_0(x)=\lim_{R\to\infty}R^{-2}u_0(Rx)=\lim_{R\to\infty}R^{-2}l^{-2}u_0(Rlx)=l^{-2}U_0(lx),$$
	namely, $U_0(x)$ satisfies condition $A$. Then by Lemma \ref{2.14}, we conclude that $U(x,l)$ is a self-expanding solution.\\
	
	Define
	$$u_{R}(x,t):=R^{-2}u(Rx,R^{2}t).$$
	It is clear that $u_{R}(x,t)$ is a solution to \eqref{121} with initial value $u_{R}(x,0)=R^{-2}u_0(Rx)$ satisfying condition $B$.
	
	For any sequence $R_i\to+\infty$, we consider the limitation of $u_{R_i}(x,t)$. For $t>0$, there holds
	$$D^2u_{R_i}(x,t)=D^2u(R_ix,R^{2}_it).$$
	Using Theorem \ref{1.2}, we have
	$$(-1+\zeta) I\le D^2u_{R_i}(x,t)\le\varrho I.$$
	for all $x$ and $t>0$. Moreover, according to \eqref{100005} in Theorem \ref{1.2}, we get
	$$	{\sup \limits_ {x\in \mathbb{R}^{n}}}|D^{l}u_{R_i}(\cdot,t)|\le C,\quad \forall t\ge\varepsilon_{0},\ l={3,4,5\cdots}.$$
	For any $m\ge1$, $l\ge0$, using \eqref{121}, there exists a constant $C$ such that
	 	$$	{\sup \limits_ {x\in \mathbb{R}^{n}}}\bigg|\frac{\partial^m}{\partial t^m}D^{l}u_{R_i}\bigg|\le C,\quad \forall t\ge\varepsilon_{0},\ l={3,4,5\cdots}.$$
	 	We observe that
	 	$$u_{R_i}=R^{-2}_iu_0 \quad and \quad Du_{R_i}(0,0)=R^{-1}_iDu_0(0)$$
	 	are both bounded. Thus $u_{R_i}(0,t)$ and $Du_{R_i}(0,t)$ are uniformly bounded with respect to $i$ for any fixed $t$. By the Arzel$\rm{\grave{a}}$-Ascoli theorem, there exists a subsequence $\{R_{k_{i}}\}$ such that $u_{R_{k_{i}}}(x,t)$ converges uniformly to a solution $\widehat{U}(x,t)$ to \eqref{121} in any compact subsets of $\mathbb{R}^{n}\times(0,+\infty)$, and $\widehat{U}(x,t)$ satisfies the estimates in Theorem \ref{1.2}. Since $\frac{\partial\widehat{U}}{\partial t}$ is  uniformly bounded for any $t>0$, $\widehat{U}(x,t)$ converges to some function $\widehat{U}_0(x)$ when $t\to0$. One can verify that
	 	$$\begin{aligned}
	 		\widehat{U}_0(x)&=\lim_{t\to0}\widehat{U}(x,t)\\
	 		&=\lim_{t\to0}\lim_{i\to+\infty}R_i^{-2}u(R_ix,R^{2}_i t)\\
	 		&=\lim_{i\to+\infty}\lim_{t\to0}R_i^{-2}u(R_ix,R^{2}_i t)\\
	 		&=\lim_{i\to+\infty}R_i^{-2}u_0(R_ix)\\
	 		&=U_0(x).
	 	\end{aligned} $$
By the uniqueness results, the above limit is independent of the choice of the subsequence $\{R_{i}\}$ and
$$\widehat{U}(x,t)=U(x,t).$$
	\indent Thus, letting $R=\sqrt{t}$, we have $t^{-1}u(\sqrt{t}x,t)=u_{\sqrt{t}}(x,1)$ converging to $U(x,1)$ uniformly in compact subsets of $\mathbb{R}^{n}$ when $t\to+\infty.$ Theorem \ref{1.3} is established.
\end{proof}
\section{proof of Theorem \ref{1.21} and \ref{1.312}}\label{6}
In this section, we will show Theorem \ref{1.21} and \ref{1.312} by transformation. We take the following trivial transform
\begin{align}\label{41}
\varphi(x)=\frac{b}{\sqrt{a^{2}+1}}u\left(\frac{(a^2+1)^{\frac{1}{4}}}{b}x\right)+\frac{a}{2b}\vert x\vert^2
\end{align}
Then, $\varphi$ satisfies
\begin{align}\label{42}
	\begin{cases}
\frac{\partial \varphi}{\partial t}=\frac{1}{2}\sum_{i=1}^{n}\ln\nu_i, \quad t>0,x\in \mathbb{R}^{n},\\
\;\; \varphi=\varphi_{0}(x), \qquad\quad\;\; t=0,x\in \mathbb{R}^{n}.
\end{cases}
\end{align}
it follows that $\varphi$ satisfies Monge-Amp$\grave{e}$re type equation
\begin{align}\label{43}
	\begin{cases}
\frac{\partial \varphi}{\partial t}=\frac{1}{2}\ln\det D^2\varphi, \quad t>0,x\in \mathbb{R}^{n},\\
\;\; \varphi=\varphi_{0}(x), \quad\qquad\;\;\; t=0,x\in \mathbb{R}^{n}.
\end{cases}
\end{align}
where $\nu=(\nu_1,\nu_2,\cdots,\nu_n)$ are the eigenvalues of $D^2\varphi$.\\
\indent On the other hand, by \eqref{41}, we get
\begin{align*}
\varphi_0(x)=\frac{b}{\sqrt{a^{2}+1}}u_0\left(\frac{(a^2+1)^{\frac{1}{4}}}{b}x\right)+\frac{a}{2b}\vert x\vert^2
\end{align*}
we now derive the second derivative for \eqref{43} via computing
\begin{align*}
   D^2\varphi_0(x)=\frac{1}{b}D^2u_0\left(\frac{(a^2+1)^{\frac{1}{4}}}{b}x\right)+\frac{a}{b}I
\end{align*}
Since
\begin{align*}
(b\mu-a)I\le D^2u_0\le(b\Lambda-a)I
\end{align*}
we obtain
\begin{align*}
\left(\mu-\frac{a}{b}\right)I\le\frac{1}{b}D^2u_0\left(\frac{(a^2+1)^{\frac{1}{4}}}{b}x\right)\le\left(\Lambda-\frac{a}{b}\right)I
\end{align*}
it is clear that
\begin{align*}
\mu I\le D^2\varphi_0(x)\le\Lambda I
\end{align*}
Therefore the equation \eqref{43} has a long-time smooth solution $\varphi(x,t)$, and $\varphi(x,t)$ produces the decay estimate of the higher derivatives ${\sup \limits_ {x\in \mathbb{R}^{n}}}|D^{l}\varphi(x,t)|\le \frac{C_l}{t^{l-2}}$ for $l=\{3,4,5,\cdots\}$ according to Theorem 1.1 in \cite{huang2011lagrangian} and Theorem 1.3 from \cite{huang2011entire}, and so is $u$. Further, Theorem \ref{1.21} is proved. Similarly, from \cite{huang2011lagrangian}, we conclude that $\varphi$ satisfies $\lim_{t\to+\infty}t^{-1}\varphi(\sqrt{t}x,t)=U(x,1)$ where $U(x,1)$ is a smooth self-expanding solution of \eqref{12}. This completes the proof of Theorem \ref{1.312}.
\section{proof of Theorem \ref{1.112} and Theorem \ref{1.31}}\label{4}
We prove Theorem \ref{1.112} and Theorem \ref{1.31} by transformation. We are eager to convert $F_{\tau}(\lambda)= \frac{\sqrt{a^{2}+1}}{b}\sum_{i=1}^{n}\rm{arctan}\frac{\lambda_{i}+a-b}{\lambda_{i}+a+b}$ into $F_{\tau}(\lambda)=\sum_{i=1}^{n}\rm{arctan}\lambda_{i}.$
\begin{proof}[proof of Theorem \ref{1.112}]
	For $\frac{\pi}{4}<\tau<\frac{\pi}{2}$, the Hessian matrix satisfies $-(b+b\eta+a)I\le D^2u_0\le(b+b\eta-a)I$. By the difference formula for tangent,
	\begin{align}\label{510}
	\rm{arctan}\frac{\lambda_{i}+a-b}{\lambda_{i}+a+b}=\rm{arctan}\frac{\lambda_i+a}{b}-\frac{\pi}{4}.
	\end{align}
	we take
	\begin{align}\label{51}
	\hat{u}(x)=\frac{b}{\sqrt{a^{2}+1}}u\left(\frac{(a^2+1)^{\frac{1}{4}}}{b}x\right)+\frac{a}{2b}\vert x\vert^2-\frac{n\pi}{4}
	\end{align}
	Then, we obtain $\hat{u}$ satisfies
	\begin{align}\label{52}
	\begin{cases}
	\frac{\partial \hat{u}}{\partial t}=\sum_{i=1}^{n}\rm{arctan}\xi_{i}, \quad t>0,x\in \mathbb{R}^{n},\\
	\;\; \hat{u}=\hat{u}_{0}(x), \qquad\qquad\;\; t=0,x\in \mathbb{R}^{n}.
	\end{cases}
	\end{align}
	where $\xi=(\xi_1,\xi_2,\cdots,\xi_n)$ are the eigenvalues of $D^2\hat{u}.$ \\
\indent The equation \eqref{51} implies that
		\begin{align*}
			\hat{u}_0(x)=\frac{b}{\sqrt{a^{2}+1}}u_0\left(\frac{(a^2+1)^{\frac{1}{4}}}{b}x\right)+\frac{a}{2b}\vert x\vert^2-\frac{n\pi}{4}
		\end{align*}
 it follows that
 \begin{align*}
   D^2\hat{u}_0(x)=\frac{1}{b}D^2u_0\left(\frac{(a^2+1)^{\frac{1}{4}}}{b}x\right)+\frac{a}{b}I
 \end{align*}
Since
\begin{align*}
-(b+b\eta+a)I\le D^2u_0\le(b+b\eta-a)I
\end{align*}
that is
\begin{align*}
-\left(1+\eta+\frac{a}{b}\right)I\le\frac{1}{b}D^2u_0\left(\frac{(a^2+1)^{\frac{1}{4}}}{b}x\right) \le\left(1+\eta-\frac{a}{b}\right)I
\end{align*}
 it is easy to see that
 \begin{align*}
 -(1+\eta)I\le D^2\hat{u}_0\le(1+\eta)I
 \end{align*}
\indent According to Theorem 1.1 in \cite{chau2013lagrangian}, the equation \eqref{52} has a unique longtime smooth $\hat{u}(x,t)$ which satisfies the decay estimate ${\sup \limits_ {x\in \mathbb{R}^{n}}}|D^{l}\hat{u}(x,t)|^2\le \frac{C_l}{t^{l-2}}$, where $t\in (\varepsilon_{0},+\infty)$ and $l=\{3,4,5,\cdots\}$, and so is $u$, Thus Theorem \ref{1.112} is proved.
\end{proof}
\indent To proof Theorem \ref{1.31}, we need the following lemma.
\begin{lemma}\label{5.11}
	Suppose that $u_0$ satisfies condition $A$ and $L$. Then $u(x,1)$ is a smooth solution to \eqref{15}	
\end{lemma}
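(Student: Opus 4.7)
The plan is to imitate the argument of Lemma \ref{2.14}, but now in the regime $\frac{\pi}{4} < \tau < \frac{\pi}{2}$ with condition $L$ replacing condition $B$ and Theorem \ref{1.112} supplying the existence/uniqueness input. First I would invoke Theorem \ref{1.112} to obtain a unique solution $u(x,t)$ of \eqref{12111} with initial datum $u_0$, smooth on $\mathbb{R}^n \times (0,+\infty)$ and continuous up to $t=0$, with $u(\cdot,t)$ again satisfying condition $L$ for all $t \ge 0$.

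The key step is a self-similarity argument. For $R>0$ set
$$u_R(x,t) := R^{-2}\, u(Rx,\, R^2 t).$$
A direct chain-rule computation gives $D^2 u_R(x,t) = D^2 u(Rx, R^2 t)$ and $\partial_t u_R(x,t) = \partial_t u(Rx, R^2 t)$, so $u_R$ solves the same fully nonlinear parabolic equation $\partial_t u_R = F_\tau(D^2 u_R)$ because $F_\tau$ depends only on the eigenvalues of the Hessian. Moreover the pointwise bounds in condition $L$ are invariant under this rescaling, so $u_R(\cdot,t)$ still satisfies condition $L$ for every $R>0$ and every $t \ge 0$.

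Now condition $A$ on $u_0$ gives $u_R(x,0) = R^{-2} u_0(Rx) = u_0(x)$, so both $u$ and $u_R$ are solutions of \eqref{12111} with the same initial datum, both lying in the class where condition $L$ holds. The uniqueness part of Theorem \ref{1.112} forces $u_R(x,t) \equiv u(x,t)$ for every $R>0$; specializing to $R = 1/\sqrt{t}$ yields the self-similar relation
$$u(x,t) = t \, u\!\left(\tfrac{x}{\sqrt{t}},\, 1\right), \qquad t>0,$$
which is exactly \eqref{23}. Applying the proposition in Section \ref{2} that links self-expanders of the flow to solutions of \eqref{15}, we conclude that $u(x,1)$ is a smooth solution of \eqref{15}.

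The only place that needs care is checking that the uniqueness assertion of Theorem \ref{1.112} applies to both $u$ and $u_R$; this is automatic because the scaling preserves condition $L$ and the initial data coincide. All remaining steps are routine chain-rule and bookkeeping, so I expect no genuine obstacle beyond verifying that the rescaled problem lies in the exact hypothesis class of Theorem \ref{1.112}.
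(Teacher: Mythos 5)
Your proof is correct and follows essentially the same route as the paper: rescale to $u_R(x,t)=R^{-2}u(Rx,R^2t)$, use condition $A$ to match initial data, invoke uniqueness from Theorem \ref{1.112} to conclude $u_R\equiv u$, and deduce the self-expander property \eqref{23}. The only difference is that you spell out the chain-rule verification and the $R=1/\sqrt{t}$ specialization more explicitly than the paper does, which the paper leaves implicit.
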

\begin{proof}
	If $u_0$ satisfies condition $L$, then by Theorem \ref{1.112}, there exists a unique smooth solution $u(x,t)$ to \eqref{12111} for all $t>0$ with initial data $u_0$. It is clear that
	\begin{align*}
	u_{R}(x,t):=R^{-2}u(Rx,R^{2}t)
	\end{align*}
		is a solution to \eqref{12111} with initial value
	$$u_{R}(x,0):=R^{-2}u_{0}(Rx)=u_{0}(x).$$
	Here condition $A$ is used. Since $u_{R}(x,0)=u_{0}$, the uniqueness results in Theorem \ref{1.112} imply
	$$u(x,t)=u_{R}(x,t)$$
	for any $R>0$. Therefore $u(x,t)$ satisfies \eqref{23}, and hence $u(x,1)$ solves \eqref{15}. Namely, $u(x,1)$ is a smooth self-expanding solution.
\end{proof}
\indent We now prove Theorem \ref{1.31}.
\begin{proof}[proof of Theorem \ref{1.31}]
Let
\begin{align*}
U_0(x)=\lim_{R\to+\infty}R^{-2}u_0(Rx).
\end{align*}
It is clear that $U(x,0)$ satisfies condition $L$. Further, we obtain
$$U_0(x)=\lim_{R\to\infty}R^{-2}u_0(Rx)=\lim_{R\to\infty}R^{-2}l^{-2}u_0(Rlx)=l^{-2}U_0(lx),$$
namely, $U_0(x)$ satisfies condition $A$. Then by Lemma \ref{5.11}, we conclude that $U(x,l)$ is a self-expanding solution.\\
Define
$$u_{R}(x,t):=R^{-2}u(Rx,R^{2}t).$$
It is clear that $u_{R}(x,t)$ is a solution to \eqref{12111} with initial value $u_{R}(x,0)=R^{-2}u_0(Rx)$ satisfying condition $L$.
For any sequence $R_i\to+\infty$, we consider the limitation of $u_{R_i}(x,t)$. For $t>0$, there holds
$$D^2u_{R_i}(x,t)=D^2u(R_ix,R^{2}_it).$$
Using Theorem \ref{1.112}, we have
\begin{align*}
-(b+b\eta+a) I\le D^2u_{R_i}(x,t)\le(b+b\eta-a)I.
\end{align*}
for all $x$ and $t>0$. Moreover, according to \eqref{142} in Theorem \ref{1.112}, we get
\begin{align*}
{\sup \limits_ {x\in \mathbb{R}^{n}}}|D^{l}u_{R_i}(\cdot,t)|\le C_l\sqrt{t}^{2-l},\qquad\;  t\in(\varepsilon_{0},+\infty), l={3,4,5\cdots}.
\end{align*}
For any $m\ge1$, $l\ge0$, using \eqref{12111}, there exists a constant $C(m,l,a,b,\eta)$ such that
\begin{align*}
{\sup \limits_ {x\in \mathbb{R}^{n}}}\bigg|\frac{\partial^m}{\partial t^m}D^{l}u_{R_i}\bigg|\le C_l\sqrt{t}^{2-l-2m},\quad t\in(\varepsilon_{0},+\infty), l={3,4,5\cdots}.
\end{align*}
We observe that
\begin{align*}
\begin{aligned}
&u_{R_i}=R^{-2}_iu_0,\\
&Du_{R_i}(0,0)=R^{-1}_iDu_0(0).
\end{aligned}
\end{align*}
are both bounded. Thus $u_{R_i}(0,t)$ and $Du_{R_i}(0,t)$ are uniformly bounded with respect to $i$ for any fixed $t$. By the Arzel$\rm{\grave{a}}$-Ascoli theorem, there exists a subsequence $\{R_{k_{i}}\}$ such that $u_{R_{k_{i}}}(x,t)$ converges uniformly to a solution $\widetilde{U}(x,t)$ to \eqref{12111} in any compact subsets of $\mathbb{R}^{n}\times(0,+\infty)$, and $\widetilde{U}(x,t)$ satisfies the estimates in Theorem \ref{1.112}. Since $\frac{\partial\widetilde{U}}{\partial t}$ is  uniformly bounded for any $t>0$, $\widetilde{U}(x,t)$ converges to some function $\widetilde{U}_0(x)$ when $t\to0$. Next, we have
\begin{align*}
\begin{aligned}
\widetilde{U}_0(x)&=\lim_{t\to0}\widetilde{U}(x,t)\\
&=\lim_{t\to0}\lim_{i\to+\infty}R_i^{-2}u(R_ix,R^{2}_i t)\\
&=\lim_{i\to+\infty}\lim_{t\to0}R_i^{-2}u(R_ix,R^{2}_i t)\\
&=\lim_{i\to+\infty}R_i^{-2}u_0(R_ix)\\
&=U_0(x).
\end{aligned}
\end{align*}
According to the uniqueness results, the above limit is independent of the choice of the subsequence $\{R_{i}\}$ and
\begin{align*}
\widetilde{U}(x,t)=U(x,t).
\end{align*}
\indent In particular, letting $R=\sqrt{t}$, we have $t^{-1}u(\sqrt{t}x,t)$ converges to $U(x,1)$ smoothly and uniformly in compact subsets of $\mathbb{R}^{n}$ when $t\to+\infty.$ 	
\end{proof}
\newpage
\vspace{5mm}

\end{document}